\newtheorem{theorem}{Theorem}[section]
\newtheorem{thm}[theorem]{Theorem}
\newtheorem{propo}[theorem]{Proposition}
\newtheorem{lemma}[theorem]{Lemma}
\newtheorem*{defn}{Definition}
\newtheorem{coro}[theorem]{Corollary}
\def\B#1#2{{#1\choose #2}}
\def\A{\mathcal{A}}
\def\B{\mathcal{B}}
\def\G{\mathcal{G}}
\def\F{\mathcal{F}}
\def\H{\mathcal{H}}
\def\N{\mathcal{N}}
\def\O{\mathcal{O}}
\def\U{\mathcal{U}}
\def\V{\mathcal{V}}
\def\W{\mathcal{W}}
\def\Id{{\rm Id}}
\def\dim{{\rm dim}}
\def\ran{{\rm ran}}
\def\ker{{\rm ker}}
\def\sign{{\rm sign}}
\def\tr{{\rm tr}}
\def\E{\rm E}
\title{A Brouwer fixed point theorem for graph endomorphisms}
\date{June 4, 2012}
\author{Oliver Knill}
\email{knill@math.harvard.edu}
\address{
        Department of Mathematics \\
        Harvard University \\
        Cambridge, MA, 02138\\
        }
\subjclass{58J20,47H10,37C25,05C80,05C82,05C10,90B15,57M15,55M20}
\keywords{Graph theory, graph endormorphisms, Lefschetz number, Euler characteristic, Brouwer fixed point, Dynamical zeta function}
\begin{document}
\maketitle

\begin{abstract}
We prove a Lefschetz formula $L(T) = \sum_{x \in \F} i_T(x)$ 
for graph endomorphisms $T: G \to G$, where $G$
is a general finite simple graph and $\F$ is the set of simplices fixed by $T$.
The degree $i_T(x)$ of $T$ at the simplex $x$ is defined as 
$(-1)^{\dim(x)} \sign(T|x)$, a graded
sign of the permutation of $T$ restricted to the simplex. The Lefschetz number $L(T)$ is 
defined similarly as in the continuum as $L(T) = \sum_k (-1)^k \tr(T_k)$,
where $T_k$ is the map induced on the $k$'th cohomology group $H^k(G)$ of $G$. 
A special case is the identity map $T$, where the formula reduces to the Euler-Poincar\'e formula 
equating the Euler characteristic with the cohomological Euler characteristic. 
The theorem assures that if $L(T)$ is nonzero, then $T$ has a fixed clique. A special
case is the discrete Brouwer fixed point theorem for graphs: if $T$ is a graph endomorphism of a 
connected graph $G$, which is star-shaped in the sense that only the zero'th cohomology group 
is nontrivial, like for connected trees or triangularizations of star shaped Euclidean domains, 
then there is clique $x$ which is fixed by $T$. If $\A$ is the automorphism group of a graph, 
we look at the average Lefschetz number $L(G)$. We prove that this is the Euler characteristic 
of the graph $G/\A$ and especially an integer. We also show that as 
a consequence of the Lefschetz formula, the zeta function 
$\zeta_T(z) = \exp(\sum_{n=1}^{\infty} L(T^n) \frac{z^n}{n})$ is a product
of two dynamical zeta functions and therefore has an analytic continuation as a rational function.
This explicitly computable product formula involves the dimension and the signature of prime orbits. 
\end{abstract}

\section{Introduction}

Brouwer's fixed point theorem assures that any continuous transformation 
on the closed ball in Euclidean space has a fixed point. First tackled by 
Poincar\'e in 1887 and by Bohl in 1904 \cite{Bohl} in the context of differential equations,
\cite{Zeidler}, 
then by Hadamard in 1910 
and Brouwer in 1912 \cite{brouwer1911} in general, it is now a basic application in 
algebraic topology \cite{Hatcher,Mil65,Guillemin}. It has its use for example in game 
theory: the Kakutani generalization \cite{kakutani} has been used to prove 
Nash equilibria \cite{Franklin}. It is also useful for
the theorem of Perron-Frobenius in linear algebra \cite{Guillemin} which is one of the
mathematical foundations for the page rank used to measure the relevance of nodes in a network. 
More general than Brouwer is Lefschetz' fixed point theorem $\sum_{x \in F} i_T(x) = L(T)$ 
\cite{lefschetz49} from 1926 which assures that if the Lefschetz number $L(T)$ of a continuous transformation 
on a manifold is nonzero, then $T$ has a fixed point. In 1928, Hopf \cite{Hopf28} extended this to arbitrary finite
Euclidean simplicial complexes and prove that if $T$ has no fixed point then $L(T)=0$.
The third chapter of \cite{Dieudonne1989} and \cite{GD} provides more history. Brouwer's theorem follows from Lefschetz 
because a manifold $M$ homeomorphic to the unit ball has $H^k(M)$ are trivial for $k>0$ so that
$L(T)=1$ assuring the existence of a fixed point. \\

Since Brouwer's fixed point theorem has been approached graph theoretically with 
hexagonal lattices \cite{gale} or using results on graph colorings like the Sperner lemma 
\cite{Knaster}, it is natural to inquire for a direct combinatorial analogue on graphs without relating 
to any Euclidean structure. But already the most simple examples like rotation on a triangle show, 
that an automorphism of a graph does not need to have a fixed vertex, even if the graph is a 
triangularization of the unit disc. Indeed, many graph endomorphisms in contractible graphs 
do not have fixed points. 
Even the one-dimensional Brouwer fixed point theorem which is equivalent to 
the intermediate value theorem does not hold: a reflection $(a,b) \to (b,a)$ on a two point graph does
not have a fixed vertex.  \\

The reason for the failure is that searching for fixed vertices is too narrow. 
We do not have to look for fixed points but fixed simplices. These fundamental entities are also 
called cliques in graph theory. This is natural since already the discrete exterior algebra 
deals with functions on the set $\G = \bigcup_{k} \G_k$ of simplices of the graph $G$, where 
$\G_0=V$ is the set of vertices $\G_1=E$ 
is the number edges, $\G_2$ the set of triangles in $G$ etc. The Euler characteristic is the
graded cardinality $\sum_{k} (-1)^k |\G_k|=\sum_k (-1)^k v_k$ of 
$\G$. The role of tensors in the continuum is played by functions on $\G$. 
A $k$-form in particular is an antisymmetric function on $\G_k$. The definition of the 
exterior derivative $df(x)=\sum_i (-1)^i f(x_0,\dots,\hat{x_i},\dots,x_k) = f(\delta x)$ 
is already Stokes theorem in its core because for a $k$-simplex $x$, the boundary
$\delta x = \bigcup (x_0,\dots,\hat{x_i},\dots,x_k)$ is the union of $(k-1)$-dimensional simplices in $x$
which form the boundary of $x$. The definition of exterior derivative tells that $df$ evaluated
at a point $x$ is the same than $f$ evaluated at the boundary point $\delta x$. 
We see that in graph theory, the term "point" comes naturally when used for cliques of the graph. \\

Because of the atomic nature of cliques, we therefore prove a Lefschetz formula which holds 
for graph endomorphisms of finite simple graphs and where the simplices are the basic "points". 
Despite the discrete setting, the structure of the proof 
is close to Hopf's approach to the classical Lefschetz theorem \cite{Hopf28}. 
More text book proofs can now be found in \cite{JM,Spanier,GD}. 
While the definition of Lefschetz number goes over directly, it was the definition of the degree =index 
of a map at a fixed simplex which we needed to look for. Direct discretisations of the classical definition of the 
Kronecker-Brower degree $i_T(x) = \sign \det(1-dT(x))$ do not work. 
We found that the degree (which we also often call index)
$$  i_T(x) = (-1)^{\dim(x)} \sign(T|x) $$ 
leads to a theorem. In this definition $n=\dim(x)$ is the dimension
of the complete graph $x=K_{n+1}$ and $\sign(T|x)$ is the signature of the permutation induced on $x$. 
With this definition, every cyclic permutation on a simplex has index $1$ and the sum of the indices over all fixed 
subsimplices of a simplex is $1$ for any permutation $T$. This matches that $L(T)=1$ for any automorphism of 
a simplex $K_n$. 
The main result is the Lefschetz formula
$$ L(T) = \sum_{x \in \F(T)} i_T(x) \; , $$
where $\F(T)$ is the subset of $\G$ which is fixed by $T$. 
The proof uses the Euler-Poincar\'e formula which is the special case when $T$ is the identity.
A second part is to verify that for a fixed point free map $L(T)=0$. A final ingredient is show 
$L(f|\U \cup \V) = L(f|\U) +L(f|\V)$ 
for two $T$-invariant simplex sets $\U,\V$. 
The Lefschetz number applied to the fixed point set
is the Euler characteristic and equal to the sum of indices, the Lefschetz number applied to 
$\H$ is zero.  \\

The probabilistic link \cite{indexexpectation} between Poincar\'e-Hopf $\sum_{x \in V} i_f(x)=\chi(G)$ 
and Gauss-Bonnet $\sum_{x \in V} K(x) = \chi(G)$ obtained by integrating over all injective functions 
$f$ on the vertex set $V$ motivates to look for an analogue of Gauss-Bonnet in this context. 
This is possible: 
define a Lefschetz curvature $\kappa(x)$ on the set of simplices $x$ of $G$ as the rational number
$$  \kappa(x)= \frac{1}{|{\rm Aut}(G)|} \sum_{T \in {\rm Aut}_x(G)} i_T(x)  \; . $$
It is an almost immediate consequence of the Lefschetz formula that the Gauss-Bonnet type formula
\begin{equation}
 \sum_{x \in \G} \kappa(x) = L(G) \; 
\label{gaussbonnet}
\end{equation}
holds, where $L(G)$ is the average over $\A={\rm Aut}(G)$ of all automorphisms.
It is a graph invariant which refers to the symmetry group of the graph. 
Unlike the Euler characteristic it can be nonzero for odd dimensional
graphs. For one-dimensional geometric graphs for example, $L(G)$ is the number of connected components
and the curvature $\kappa(x)$ on each edge or vertex of  $C_n$ is constant $1/(2n)$.
For complete graphs, the Lefschetz curvature 
is concentrated on the set $\G_0$ of vertices and constant $1/(n+1)$.
An other extreme case is when $\A$ is trivial, where curvature
is $1$ for even-dimensional simplices and $-1$ for odd-dimensional simplices. The Gauss-Bonnet type 
Formula~(\ref{gaussbonnet}) is then just a reformulation of the Euler-Poincar\'e formula 
because $L(G)$ is then the cohomological Euler characteristic.  \\

While
$L(G)$ behaves more like a spectral invariant of the graph as the later also depends crucially on symmetries,
we will see that $L(G)$ is the Euler characteristic of
the quotient graph $G/\A$ of the graph by its automorphism group. The quotient graph is a discrete analogue
of an orbifold. In the case $P_3$ for example, where we have $6$ automorphisms, the Lefschetz numbers are
$(3, 1, 1, 0, 0, 1)$ with the identity $L(\Id)=\chi(G)=3$, the rotations $L(T)=0$ and reflections of two vertices
give $L(T)=1$. The average $L(G)=1$ is the Euler characteristic of $K_1 \sim G/\A$. For
the complete graph $K_3$, which has the same $6$ automorphisms, the Lefschetz numbers are $(1,1,1,1,1,1)$
and again $L(G)=1$. The proof that $L(G)$ is an Euler characteristic only uses the Burnside lemma
in group theory and is much simpler than the analogous result for orbifolds.  \\

Since the Lefschetz number is a weighted count of fixed points, the Lefschetz number of the iterate $T^n$ 
is a weighted count of periodic orbits. The Lefschetz zeta function 
$$ \zeta_T(z) = \exp(\sum_{n=1}^{\infty} L(T^n) \frac{z^n}{n})  \;  $$
encodes this. It is an algebraic version of the Artin-Mazur 
\cite{ArtinMazur} zeta function which in dynamical systems theory is
studied frequently \cite{ruellezeta}. Actually, as we will see in this article, the Lefschetz formula 
implies that the Lefschetz zeta function of a graph automorphism is the product of 
two zeta functions defined in dynamical systems theory. 
It therefore has a product formula. This formula is a finite product over all possible prime periods 
$$ \zeta_T(z) = \prod_{p=1}^{\infty} (1-z^p)^{a(p)-b(p)} (1+z^p)^{c(p)-d(p)} \; , $$
where $a(p)$ rsp. $c(p)$ is the number of odd-dimensional prime periodic orbits $\{x,Tx,\dots,T^{p-1}x\}$
for which $T^p|x$ has positive rsp negative signature and $b(p)$ rsp. $d(p)$ are
the number of even-dimensional prime periodic orbits for which $T^p|x$ has positive rsp. negative signature. \\

The zeta function (or rather its analytic continuation given by the rational function in the form of
the just mentioned product formula) contains the Lefschetz numbers of iterates of 
the map because it defines a Taylor series with periodic coefficients
$$  \frac{d}{dz} \log(\zeta(z)) = \sum_{n=1}^{\infty} L(T^n) z^{n-1} \; . $$
For the zeta function of a reflection at a circular graph $C_n$ for example,
where $\zeta(z)=(z+1)/(1-z)$ the right hand side is $2+2z^2+2z^4+\cdots$ 
showing that $L(T^n)=2$ for odd $n$. 
An immediate consequence of product formulas for dynamical zeta functions gives a product formula
which is in the case of the identity $\zeta_{\Id}(z)=(1-z)^{-\chi(G)}$ again just a reformulation of
the Euler-Poincar\'e formula. \\
As in number theory, where the coefficients in Dirichlet $L$-series are multiplicative characters, also
dynamical zeta function have coefficients which are multiplicative by definition. When using the
degree $i_T(x)$, this is not multiplicative because of the dimension factor $(-1)^{\dim(x)}$.
We can split the permutation part from the dimension part however and write a product 
formula for the zeta function which involves two dynamical zeta functions. \\

Because graphs have finite automorphism groups $\A={\rm Aut}(G)$, one can define a zeta function of the graph as
$$ \zeta_G(z) = \prod_{T \in Aut(G)} \zeta_T(z) \; . $$
As the Lefschetz zeta function of a transformation, the graph zeta function is a rational function.
For a reflection $T$ at a circular graph $C_n$
for example, we have $\zeta_T(z) = (1+z)/(1-z)$ because 
$L(T)=2,L(T^2)=0$ and $\exp(\sum_{n \; {\rm odd}} 2 z^n/n)=(1+z)/(1-z)$
so that $\zeta_G(z) = (1+z)/(1-z)$. For a graph with trivial automorphism group, 
we have $\zeta_G(z) = \zeta_{\Id}(z) = (1-z)^{-\chi(G)}$. These examples prompt the question about the
role of the order of the zero or of pole at $z=1$.
The order at $z=1$ is important wherever zeta functions appear, the original 
Riemann zeta function with a pole of order $1$.
An other example is for subshifts of finite type with Markov matrix $A$, where 
the Bowen-Lanford formula \cite{BowenLanford} writes the dynamical zeta function 
as $\zeta(z) = 1/{\rm det}(1-A z)$ which by Perron-Frobenius has a pole of order 
$k$ at $z=1$ if $A$ has $k$ irreducible components. \\

The proof of the discrete Lefschetz formula is graph theoretical and especially does not involve
any limits. Like Sperner's lemma, it would have convinced the intuitionist Brouwer: 
both sides of the Lefschetz formula can be computed in finitely many 
steps. The Lefschetz formula is also in this discrete incarnation a generalization
of the Euler-Poincar\'e formula which is a linear algebra result in the case of graphs. If we look at
the set $\G$ of all the simplices of a graph, then this set can be divided into a set $\F$ which 
is fixed and a set $\N$ which wander under the dynamics. The fixed simplices 
can be dealt with combinatorically.  \\

Lets see what happens in the special case when $T$ acts on a complete graph $G$ and
where $x$ is the simplex which is the entire graph. Understanding this is crucial.
The Lefschetz fixed point formula is $\sum_{y \in \F(T)} i_T(y) = L(T)=1$. 
Lets see the proof: the permutation $T$ induced on $G$ decomposes into cycles $y_1,\dots ,y_k$ 
which all are subsimplices of $G$. Since also arbitrary unions of simplices are simplices, the transformation 
$T$ fixes $2^k-1$ simplices which is the set of all subsets except the empty set 
which does not count as a fixed point.
We have $i_T(y_j)=1$ because the order of the cyclic permutation cancels the dimensional grading.
Next $i_T(y_i \cup y_j) = -1$ and in general 
$$   i_T(y)=(-1)^{|y|-1}  \; , $$
where $|y|$ is the number of orbits in $y$. In other words, $i_T(y)$ depends on 
the dimension of the "orbit simplex". 
Since $\sum_{y \in \F(T) \subset x} (-1)^{|y|-1} =0$ and $i(\emptyset)=-1$,
we have $\sum_{y \in \F(T) \subset x, y \neq \emptyset} i_T(y)=1$ which agrees with $L(T)=\chi(G) =1$. 

\section{The Lefschetz number}

Given a simple graph $G=(V,E)$ denote by $\G_k$ the set of complete $K_{k+1}$ subgraphs of $G$. 
Elements in $\G_k$ are called cliques. The set $\G_2$ is the set of all triangles 
in $G$, $\G_1=E$ the set of edges and $\G_0=V$ the set of vertices. If the cardinality 
of $\G_k$ is denoted by $v_k$, then the Euler characteristic of $G$ is defined as 
$$  \chi(G) = \sum_{k=0}^{\infty} (-1)^k v_k  \; , $$ 
a finite sum. \\

To get the discrete exterior bundle, 
define a $k$-form as a function on $\G_k$ which is antisymmetric in its $(k+1)$ arguments. 
The set $\Omega^k$ of all $k$-forms is a vector space of dimension $v_k$. 
The exterior derivative $d: \Omega_{k} \to \Omega_{k+1}$ is defined as
$df(x)=\sum_i (-1)^i f(x_0,\dots,\hat{x_i},\dots,x_k)$, where $\hat{x}$ denotes a variable taken away.
A form is closed if $df=0$. It is exact if $f=dg$. The vector space $H^k(T)$ of closed forms modulo exact forms 
is the cohomology group of dimension $b_k$, the Betti number. 
The cohomological Euler characteristic of $G$ is defined as 
$$  \sum_{k=0}^{\infty} (-1)^k b_k \; . $$
The sign ambiguity of forms can be fixed by defining an orientation on $G$. The later assigns a constant 
$n$-form $1$ to each maximal $n$-dimensional simplex; a simplex being maximal if it is not contained in a larger
simplex. $G$ is orientable if one can find an orientation which is compatible on the intersection of maximal
simplices. If $G$ should be nonorientable, we can look at a double cover $G'$ of $G$ and define $H^k(G)$ as $H^k(G')$.
A graph automorphism lifts to the cover and a fixed point in the cover projects down to a fixed point on $G$. \\

{\bf Example:} \\
Let $G$ be a triangle. The vector space of $0$-forms is three dimensional, the space of $1$-forms $3$-dimensional
and the space of $2$ forms one-dimensional. An orientation is given by defining $f(1,2,3)=1$ inducing orientations
on the edges $f(1,2)=f(2,3)=f(3,1)=1$. \\

A graph endomorphism is a map $T$ of $V$ such that if $(a,b) \in E$ then $T(a,b) \in E$. If $T$ is invertible, then
$f$ is called a graph automorphism. Denote by $T_p$ the induced map on the vector space 
$H^p(G)$. As a linear map it can be described by a matrix once a basis is introduced on $H^p(G)$. \\

{\bf Remark}. \\
We can focus on graph automorphisms, 
because the image ${\rm im}(T)$ is $T$-invariant and $T$ restricted to the attractor 
${\rm im}(T^n)$ for sufficiently large $n$ is an automorphism. This is already evident by ignoring the graph structure
when looking at permutations only.  \\

The following definition is similar as in the continuum 

\begin{defn}
Given a graph endomorphism $T: G \to G$ on a simple graph $G$, define the Lefschetz number as 
$$ L(T) = \sum_{p=0}^{\infty} (-1)^p \tr(T_p)  \; ,  $$
where $T_k$ is the map $T$ induces on $H^k(G)$.
\end{defn}


{\bf Examples.}  \\
{\bf 1)} For the identity map $T$, the number $L(T)$ is the cohomological Euler characteristic of $G$. 
Denote by $\F(T)$ the set of {\bf fixed points} of $T$. In the same way as the classical Lefschetz-Hopf theorem 
we have then $L(T)= \sum_{x} i_T(x) = \sum_x (-1)^{\dim(x)}$, where $i_T$ is the index of the transformation. \\
{\bf 2)} If $G$ is a zero dimensional graph, a graph without edges, and $T$ is a permutation of $V$
then it is an automorphism and $L(T)$ is equal to the number of fixed points of $T$.  
The reason is that only $H^0(G)$ is nontrivial and has dimension $v_0$. The transformation $T_0$ is the permutation
defined by $T$ and $\tr(T_0)=v_0$.  \\
{\bf 3)} If $G$ is a complete graph, then any permutation is an automorphism. Only $H^0(G)$ is nontrivial and has 
dimension $1$ and $L(T)=1$.  \\
{\bf 4)} The tetractys is a graph of order $10$ which is obtained by dividing a triangle into 9 triangles.
The automorphism group is the symmetry group $D_3$ of the triangle. Again, since only $H^0(G)$
is nontrivial, $L(T)=1$ for all automorphisms. For rotations, there is only one fixed point, the central triangle. 
For reflections, we have $2$ vertices, $2$ edges and $3$ triangles fixed.  \\
{\bf 5)} For a cyclic graph $C_n$ with $n \geq 4$, both $H^0(G)$ and $H^1(G)$ are nontrivial. 
If $T$ preserves orientation and is not the identity, then there are no fixed points. The Lefschetz number is $0$. 
For the reflection $T$, the Lefschetz number is $2$. Any reflection has either $2$ edges or two vertices
or a vertex and an edge fixed. \\
{\bf 6)} The Petersen graph has order 10 and size 15 has Euler characteristic $-5$ 
and an automorphism group of 120 elements. The Lefschetz
number of the identity is $-5$, there are 24 automorphisms with $L(T)=0$ and $80$ automorphisms with $L(T)=1$ 
and 15 automorphisms with $L(T)=3$. The sum of all Lefschetz numbers is $120$ and the average Lefschetz number
therefore $1$. We will call this $L(G)$ and see that it is $\chi(G/\A)$, where $G/\A$ is the quotient graph
which consists of one point only. 

\section{Lefschetz fixed point theorem}

\begin{defn}
Denote by $\F(T)$ the set of simplices $x$ which are invariant under the endomorphism $T$. 
A simplex is invariant if $T(x)=x$. In this case, $T|x$ is a permutation of the simplex.
\end{defn}

\begin{defn}
For a fixed simplex $x$ in the graph $G$ and an endomorphism $T$, define the {\bf index}
$$  i_T(x) = (-1)^{\dim(x)} \sign(T|x) \; ,  $$
where $\sign(T|x)$ is the signature of the permutation $T$ induces on $x$.
The integer $\sign(T) \in \{-1,1 \; \}$ is the determinant of the corresponding permutation matrix. 
\end{defn}

{\bf Remarks}. \\
{\bf 1)} In the continuum, the inner structure of a fixed point is accessible through the derivative and
classically, $i_T(x) = \sign({\rm det}(dT(x)-I))$ is the index of a fixed point.
{\bf 2)} Is there a formal relation between the continuum and the discrete?  
In the continuum, we have $i_T(x)=p(1)$ where $p$ is the characteristic polynomial of $dT(x)$. 
In the discrete we have $i_T(x) = -p(0)$ where $p$ is the characteristic polynomial of
the permutation matrix $-P$ of $T$ restricted to $x$.  \\

{\bf Examples.} \\
{\bf 1)} If $x$ is a $0$-dimensional simplex (a vertex), 
then $i_T(x)=1$ for every fixed point $x$ and the sum of indices agrees with $L(T)$. \\
{\bf 2)} If $x$ is a $1$-dimensional simplex $K_2$ (an edge) and $f$ is the identity, then $i_T(x)=-1$. 
If $f$ flips two point in $x=K_2$, then $i_T(x)=1$.  \\
{\bf 3)} Let $G$ be a cyclic graph $C_n$ with $n \geq 4$. An automorphism is either a rotation or a
reflection. We have $\tr(T_1)=-1$ in the orientation preserving case and $\tr(T_1)=1$ in the 
orientation reversing case. For any invariant simplex, we have $i_T(x)=1$. \\
{\bf 4)} If $G$ is a wheel graph and $T$ is a rotation, then there is one fixed point and $L(T)=1$. The index of 
the fixed point is $1$ as any $0$-dimensional fixed point has.  \\

\begin{thm}[Lefschetz formula]
For any graph endomorphism $T$ of a simple graph $G$ with fixed point set $\F(T)$  we have
$$ L(T) = \sum_{x \in \F(T)} i_T(x) \; . $$
\label{lefschetz formula} 
\end{thm}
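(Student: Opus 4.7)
The plan is to run the classical Hopf trace formula directly on the discrete exterior complex $(\Omega^\bullet,d)$. The endomorphism $T$ pulls $k$-forms back by $(T^\# f)(x_0,\dots,x_k)=f(Tx_0,\dots,Tx_k)$; this is well defined because the endomorphism property (no loops in a simple graph) forces $T$ to be injective on each simplex. Since $T^\#$ commutes with $d$, it descends to the maps $T_k$ on $H^k(G)$ used in the definition of $L(T)$, and a standard finite-dimensional argument — split each $\Omega^k$ as $\ker d\oplus$ complement and peel off $d\Omega^{k-1}$ — gives the Hopf identity
$$L(T)=\sum_{k\geq 0}(-1)^k\tr(T_k)=\sum_{k\geq 0}(-1)^k\tr(T^\#|\Omega^k).$$
This is the only abstract ingredient; it turns the theorem into a combinatorial trace computation on chains, which is where the geometry of the proof lives.

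For each $k$-simplex $x$ fix an ordering of its vertices $v_0,\dots,v_k$ and let $f_x\in\Omega^k$ be the corresponding antisymmetric indicator; the $f_x$ form a basis of $\Omega^k$. The diagonal entry of $T^\#$ at $f_x$ is $f_x(Tv_0,\dots,Tv_k)$. If $T(x)\neq x$ as a set this vanishes; if $T(x)=x$ then $T|x$ is a permutation $\sigma$ of the vertex set and antisymmetry of $f_x$ gives $f_x(v_{\sigma(0)},\dots,v_{\sigma(k)})=\sign(\sigma)=\sign(T|x)$. Hence
$$\tr(T^\#|\Omega^k)=\sum_{x\in\F(T)\cap\G_k}\sign(T|x),$$
and substituting into the Hopf identity while absorbing the grading sign into $i_T(x)=(-1)^{\dim(x)}\sign(T|x)$ yields
$$L(T)=\sum_k(-1)^k\sum_{x\in\F(T)\cap\G_k}\sign(T|x)=\sum_{x\in\F(T)}i_T(x).$$
The $T=\Id$ specialization collapses to $\chi(G)=\sum_k(-1)^kb_k$, matching the Euler-Poincar\'e baseline the introduction flags.

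I expect the main hurdle to be orientation bookkeeping rather than the trace argument itself. Because $\Omega^k$ is defined as antisymmetric functions on $\G_k$, the trace computation only needs an arbitrary ordering at each simplex and not a globally consistent orientation, and $\sign(T|x)$ is intrinsic to the vertex permutation rather than the basis choice. If instead one uses the excerpt's convention of defining $H^k(G):=H^k(G')$ via an oriented double cover $G'$ in the non-orientable case, one must verify that $T$ lifts to an automorphism $T'$ of $G'$, that fixed simplices of $T$ correspond via the covering map to fixed simplices of $T'$ with matching signatures, and that running the Hopf argument on $G'$ descends cleanly to $G$. The only other routine but necessary check is that $T^\#$ really commutes with $d$ so that the induced maps $T_k$ exist and the Hopf identity applies; this is a direct calculation from the definitions of $d$ and $T^\#$, but it is where the reasoning could go wrong if the orientation/sign conventions are not pinned down carefully.
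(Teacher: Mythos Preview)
Your proof is correct and takes a genuinely different route from the paper. You invoke the Hopf trace identity $\sum_k(-1)^k\tr(T_k|H^k)=\sum_k(-1)^k\tr(T^\#|\Omega^k)$ once and then read off the chain-level trace in one line: the diagonal entry at a simplex $x$ is $\sign(T|x)$ if $Tx=x$ and $0$ otherwise. The paper instead defines a localized number $L(T|\A)=\sum_p(-1)^p\tr(T_p|\A)$ on $T$-invariant subsets $\A\subset\G$, proves it is additive over disjoint pieces, splits $\G=\F\cup\N$ into fixed and non-fixed simplices, and argues separately via an orbit analysis that $L(T|\N)=0$ and via the Euler--Poincar\'e lemma that $L(T|\F)=\sum_{x\in\F}i_T(x)$. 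Your single trace computation absorbs both of these lemmas at once (non-fixed simplices have zero diagonal entry; fixed ones contribute $\sign(T|x)$), and it treats endomorphisms directly without first passing to the attractor to get an automorphism. What the paper's decomposition buys is a more hands-on geometric picture of \emph{why} wandering simplices contribute nothing, and it foregrounds Euler--Poincar\'e as an independent ingredient; what your approach buys is brevity and a cleaner treatment of the sign bookkeeping. Your remark that only an arbitrary per-simplex ordering is needed, not a global orientation or a double cover, is correct and in fact sidesteps a detour the paper takes.
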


{\bf Examples.} \\
{\bf 1)} For the identity map $T(x)=x$ we have $L(T) = \chi(G)$ as in the continuum. 
The formula rephrases the Euler-Poincar\'e formula telling that the homological Euler 
characteristic is the graph theoretical Euler characteristic because 
$i_T(x)=(-1)^{\dim(x)}$ and every $x$ is a fixed point.  \\
{\bf 2)} Assume $G$ is a $1$-dimensional circular graph $C_n$ with $n \geq 4$. 
If $T$ is orientation preserving, then the Lefschetz number is $0$, otherwise it is 
$2$ and we have two fixed points. Lets compute $L(T)$ in the orientation preserving case: 
the space $H^0(G)$ is $R$ and the map $T$ induces the identity on it.
The space $H^1(G)$ consists of all constant functions on edges and $T$ induces $-\Id$.
If $T$ is orientation reversing, then the left hand side is $1-(-1)=2$. Indeed we  have then 
two fixed points.  \\
{\bf 3)} Assume $G$ is an octahedron and $T$ is an orientation preserving automorphims of $G$, 
then $T_0$ on $H^0(G)$ and $T_2$ on $H^2(G)$ are both the identity and since $H^1(G)$ 
is trivial, the Lefschetz number is $2$. There are always at least two fixed simplices. 
It is possible to have two triangles or two vertices invariant. \\
{\bf 4)} The Lefschetz number of any map induced on the wheel graphs is $1$ because only 
$H^0(G)$ is nontrivial. Any endomorphism has at least one fixed point. \\
{\bf 5)} If  $G$ is an icosahedron, then there are automorphims which have just two 
triangles fixed. Also two fixed points are possible. \\
{\bf 6)} Assume $T$ is an orientation reversing map, a reflection on an octahedron. 
We do not need to have a fixed point. Indeed, the map $T$ induced on $H^2(G)$ is $-1$
and the Lefschetz number is $L(G)=1-1=0$. \\
{\bf 7)} If $G$ consists of two triangles glued together at one edge, then $\chi(G)=1$. Take $T$
which exchanges the two triangles. This leaves the central edge invariant. The Lefschetz number
of $T$ is $1$. \\
{\bf 8)} For a complete graph $G=K_{n+1}$, any permutation is a graph automorphism. The Lefschetz
number is $1$ because only $H^0(G)$ is nontrivial. The index of any cyclic subsimplex
is $1$. As in the identity case, we have $\sum_{x \in \F(T)} i_T(x) = 1$, which is the $L(T)$.
As mentioned in the introduction one can see this special case as a Euler-Poincar\'e formula
for the orbit graph because $T$ on every cyclic orbit $y$ is a cyclic permutation with $i_T(y)=1$.  \\

The classical Poincar\'e lemma in Euclidean space assures that for a region homeomorphic to a 
star-shaped region only $H^0(G)$ is nonzero. This motivates to define:

\begin{defn}
A graph $G=(V,E)$ is called star-shaped if all vector spaces $H^k(G)$ are trivial for $k \geq 1$. 
\end{defn}

{\bf Examples.} \\
{\bf 1)} Given an arbitrary graph $H=(V,E)$, then the pyramid construction 
$$ G = (V \cup \{p\},E \cup \{ (v,p) \; | \; v \in V \; \} $$ 
is star-shaped. \\
{\bf 2)} Any tree $G$ is star-shaped as there are no triangles in the tree making $H^k(G)$ trivially
vanish for $k \geq 2$. The vector space $H^1(G)$ is trivial because there are no loops. \\
{\bf 3)} The complete graph is star-shaped. \\
{\bf 4)} The cycle graph $C_n$ is not star-shaped for $n>3$. \\
{\bf 5)} Any finite simply connected and connected subgraph of an infinite hexagonal graph is star-shaped. \\
{\bf 6)} The icosahedron and octahedron are both not star-shaped. Actually, any orientable 
         $2$-dimensional geometric graph (a graph where each unit sphere is a $1$-dimensional circular graph) is
         not star-shaped as Poincar\'e duality $H^0(G) \sim H^2(G)$ holds for such graphs. \\

As in the continuum, the Brouwer fixed point theorem follows:

\begin{theorem}[Brouwer fixed point]
A graph endomorphism $T$ on a connected star-shaped graph $G$ has a fixed clique. 
\end{theorem}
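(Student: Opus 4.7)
The plan is to deduce this immediately from the Lefschetz formula (Theorem~\ref{lefschetz formula}) by showing that $L(T)=1$ whenever $G$ is connected and star-shaped. If that is established, then $\sum_{x \in \F(T)} i_T(x) = 1 \neq 0$, which forces $\F(T)$ to be nonempty, giving the fixed clique.

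First I would compute the cohomology of a connected star-shaped graph. By the star-shaped hypothesis, $H^k(G)=0$ for every $k\ge 1$, so the only nontrivial contribution to $L(T)$ comes from $H^0(G)$. Since $G$ is connected, the closed $0$-forms (functions with $df=0$) are exactly the constant functions, so $H^0(G)\cong \mathbb{R}$ is one-dimensional. Any graph endomorphism $T$ pulls back constants to constants, hence $T_0=\Id$ on $H^0(G)$, giving $\tr(T_0)=1$.

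Next, the Lefschetz number collapses to a single term:
\begin{equation*}
L(T)=\sum_{p=0}^{\infty}(-1)^p\tr(T_p)=\tr(T_0)=1.
\end{equation*}
Invoking Theorem~\ref{lefschetz formula}, we get $\sum_{x\in\F(T)} i_T(x)=1$, so $\F(T)$ cannot be empty; at least one simplex $x$ satisfies $T(x)=x$, i.e.\ $T$ fixes a clique.

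I do not expect a real obstacle here: the argument is essentially the standard deduction of Brouwer from Lefschetz, and the only content beyond Theorem~\ref{lefschetz formula} is identifying $H^0$ for a connected graph as one-dimensional and checking that any endomorphism acts trivially on constants. The mildly delicate point worth flagging is the assertion that $T_0=\Id$ rather than merely an automorphism of $H^0(G)$: one should note that even if $T$ is only an endomorphism (not surjective), the pullback on constants is still the identity, since a constant function composed with any map remains the same constant. Once this is observed, the proof is essentially a two-line corollary of the Lefschetz formula.
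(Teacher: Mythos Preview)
Your proof is correct and follows exactly the paper's approach: compute $L(T)=1$ from the star-shaped and connectedness hypotheses, then invoke the Lefschetz formula to obtain a fixed clique. Your added remark that an endomorphism still acts as the identity on constants is a useful clarification, but the argument is otherwise identical to the paper's.
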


\begin{proof}
We have $L(T)=1$ because only $H^0(G)=R$ is nontrivial and $G$ is connected. 
Apply the Lefschetz fixed point theorem.
\end{proof}

\section{Proof}

We can restrict ourself to graph automorphisms because an endomorphism $T$ 
restricted to the attractor $G'=\bigcap_{k=0}^{\infty} T^k(G)$ of $T$ is an automorphism. Any
fixed point of $T$ is obviously in the attractor $G'$ so that the sum in the Lefschetz formula does not change
when looking at $T$ on $G'$ instead of $T$ on $G$. Also the Lefschetz number $L(T)$ does not change as
any invariant cohomology class, an
eigenvector $w$ of the linear map $L_k$ on the vector space $H^k(G)$ must be supported on $G'$. \\

The set of $\G$ is the union of the set $\F$ of simplices which are fixed and
the set $\N$ of simplices which are not fixed by the automorphism $T$.
It is possible that some elements in $\N$ can be a subsimplex of an element in 
$\F$. For a cyclic rotation on the triangle $K_3$ for example, the triangle 
itself is in $\F$ but its vertices are in $\N$.  \\
To see the Lefschetz number more clearly, we extend $T$ to $\G_k$.
Given a $k$-simplex $x$, it has an orbit $x,T(x),T^2(x),\dots,T_k^n(x)$ 
which will eventually circle in a loop since $T_k$ is a map on a finite set. 

\begin{defn}
The Euler characteristic of a subset $\A$ of $\G$ is defined as
$$  \chi(\A) = \sum_{p=0}^{\infty} (-1)^p |\A \cap \G_p| \; . $$
The Lefschetz number $L$ of $T$ restricted to an invariant set $\A$ is defined as
$$ L(T|\A) = \sum_{p=0}^{\infty} (-1)^p \tr(T_p| \A)  \; ,  $$
where $T_p$ is the map induced on the linear subspace generated by functions on $\A$.
\end{defn}

{\bf Remarks.} \\
{\bf 1)} The linear subspace generated by functions on $\A$ is in general not invariant under the
exterior derivative $d$: a function supported on $\A$ has $df$ which is
defined on $\G$ and not on $\A$ in general. \\
{\bf 2)} We have $\chi(\G)=\chi(G)$, where the left hand side is the Euler characteristic
of the super graph and the right hand side the Euler characteristic of the graph. Note however
that there are subsets of $\G$ which are not graphs. For a triangle for example, we can look
at the set $\V=\G_1$ of edges and get the Euler characteristic $\chi(\V)=-3$. 

\begin{lemma}[Additivity of $L$]
Assume $\U,\V$ are disjoint subsets of $\G$ and
assume both are $T$-invariant. Then 
$$  L(T|\U \cup \V) = L(T|\U) + L(T|\V) \; . $$
Especially, for $T=\Id$,
$$  \chi(\U \cup \V) = \chi(\U) + \chi(\V) \; . $$
\label{lemma1}
\end{lemma}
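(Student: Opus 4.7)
The plan is to reduce the claim to a direct-sum decomposition of the permutation representation of $T$ on $p$-forms, dimension by dimension, and then sum with alternating signs. As noted in the paper, we may assume $T$ is an automorphism, so $T$ permutes $\G_p$ for every $p$; invariance of $\U$ and $\V$ therefore means $T$ permutes $\U \cap \G_p$ and $\V \cap \G_p$ separately.

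First I would unwind the definition: $T_p$ here is not the map on $H^p$ (for which the subspaces in question need not be stable, as Remark~1 warns), but rather the induced map on the vector space of antisymmetric functions on $\G_p$ supported on the set in question. Since $\U$ and $\V$ are disjoint, the $p$-form space over $\U \cup \V$ decomposes as an internal direct sum
\begin{equation*}
\Omega^p(\U \cup \V) \;=\; \Omega^p(\U) \oplus \Omega^p(\V),
\end{equation*}
where $\Omega^p(\A)$ denotes forms supported on $\A \cap \G_p$. Each summand is $T_p$-invariant by the invariance of $\U$ and $\V$, so in the natural basis indexed by simplices (with a chosen orientation), the matrix of $T_p$ on $\Omega^p(\U \cup \V)$ is block-diagonal with blocks $T_p|\U$ and $T_p|\V$.

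Taking traces block by block yields $\tr(T_p \mid \U \cup \V) = \tr(T_p \mid \U) + \tr(T_p \mid \V)$ for each $p$. Multiplying by $(-1)^p$ and summing over $p$ gives
\begin{equation*}
L(T \mid \U \cup \V) \;=\; L(T \mid \U) + L(T \mid \V).
\end{equation*}
The specialization to $T = \Id$ is immediate: every simplex is fixed, $\tr(\Id_p \mid \A) = |\A \cap \G_p|$, and the alternating sum is precisely $\chi(\A)$, so $\chi(\U \cup \V) = \chi(\U) + \chi(\V)$.

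Honestly, the statement is essentially bookkeeping once one fixes the definitions; the only point requiring care, and what I would expect to be the main pitfall for a reader, is the Remark~1 warning — one must do this at the level of the permutation action on forms, not on the cohomology $H^p$, because a form supported on $\U$ can have exterior derivative $df$ supported off $\U$. So I would make a point of stressing that $T_p$ in the definition of $L(T \mid \A)$ refers to the permutation representation on $\Omega^p(\A)$, for which the direct-sum decomposition above is unambiguous.
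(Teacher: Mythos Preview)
Your argument is correct and is essentially the same as the paper's: both decompose the space of $p$-forms over $\U\cup\V$ as a direct sum of the pieces over $\U$ and $\V$, observe that $T_p$ is block-diagonal with respect to this splitting, and conclude by additivity of trace. Your write-up is in fact more careful than the paper's in explicitly distinguishing the form level from cohomology (the point of Remark~1), which is exactly the right emphasis.
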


{\bf Remark.} \\
The vertex sets defined by two subsets $\U,\V \subset \G$ 
do not need to be disjoint. For a triangle $G=K_3$ for example, $\U=\G_3$ and 
$\V=\G_1$ are disjoint sets in $\G$ even so 
every $y \in \V$ is a subgraph of every nonempty subset of $\U$. 

\begin{proof}
We only have to show that
$$   L_k(\G) = L_k(\U) + L_k(\V) \;  .  $$
Let $\U_l$ be the set of $l$-simplices in $\U$,
$\V_l$ the set of simplices in $\V$.
The sets $\U_l$ are $T$ 
invariant for $l \leq k$. Any member $f_l$ of a cohomology class $H^l(G)$ is a 
function on $\U_l$ can be decomposed
as $f_l = f_{\U}$ where $f_{\U}$ has support in $\U_l$ etc. 
The matrix $L_k$ is a block matrix and the trace is the sum of the traces of the blocks. 
\end{proof}

\begin{coro}
Assume $\U,\V$ are $T$-invariant subsets of $\G$, then 
$$  L(T|\U \cup \V) = L(T|\U) + L(T|\V) - L(T|\U \cap V) \; . $$
\label{lemma2}
\end{coro}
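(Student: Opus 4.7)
The plan is to reduce to the disjoint additivity lemma (Lemma~\ref{lemma1}) via the standard set-theoretic decomposition that underlies inclusion-exclusion. First I would observe that because $\U$ and $\V$ are both $T$-invariant, the three sets
$$ \A = \U \setminus \V, \qquad \B = \V \setminus \U, \qquad \C = \U \cap \V $$
are each $T$-invariant as well. This is immediate from the definitions: if $x \in \U \cap \V$ then $T(x) \in \U$ and $T(x) \in \V$, so $T(x) \in \C$; similarly $T$ cannot map an element of $\U \setminus \V$ into $\V$ without violating $T$-invariance of $\V$.

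Next I would decompose the three relevant sets into disjoint pieces:
$$ \U = \A \sqcup \C, \qquad \V = \B \sqcup \C, \qquad \U \cup \V = \A \sqcup \B \sqcup \C \; . $$
Applying Lemma~\ref{lemma1} to each of these disjoint decompositions (iterated once for the three-way split) yields
$$ L(T|\U) = L(T|\A) + L(T|\C), \quad L(T|\V) = L(T|\B) + L(T|\C), $$
$$ L(T|\U \cup \V) = L(T|\A) + L(T|\B) + L(T|\C) \; . $$
Adding the first two identities and subtracting $L(T|\C)$ gives $L(T|\U) + L(T|\V) - L(T|\C) = L(T|\A) + L(T|\B) + L(T|\C)$, which is exactly $L(T|\U \cup \V)$. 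Rearranging produces the desired formula.

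The only step requiring any care is the verification that $\U \setminus \V$, $\V \setminus \U$, and $\U \cap \V$ inherit $T$-invariance, since Lemma~\ref{lemma1} presupposes this in order to make sense of the trace of $T$ restricted to each piece. Once that is in hand, the argument is purely the classical two-line inclusion-exclusion calculation lifted from cardinalities to the additive invariant $L(T|\cdot)$. I do not anticipate any genuine obstacle; the corollary is essentially a formal consequence of additivity on an abelian group.
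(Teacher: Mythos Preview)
Your argument is correct and is essentially the paper's own proof: both derive the formula from the disjoint additivity of Lemma~\ref{lemma1} via the standard inclusion--exclusion decomposition, the paper applying the lemma twice (to $\U \setminus \W,\W$ with $\W=\U\cap\V$, then to $\U,\V\setminus\W$) while you use the symmetric three-piece partition $\A\sqcup\B\sqcup\C$. One small point on the step you flagged as requiring care: the claim that $T$ cannot carry an element of $\U\setminus\V$ into $\V$ does not follow from $T$-invariance of $\V$ alone for a general endomorphism; it holds here because the paper has already reduced to the case where $T$ is a bijection on the finite set $\G$, so $T(\V)\subseteq\V$ forces $T(\V)=\V$ and hence the complement is preserved as well.
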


\begin{proof}
Write $\W=\U \cap \V$ and apply lemma(\ref{lemma1}) twice for the disjoint sets 
$\U = \U \setminus \W,\W$ and then $\U,\V\setminus \W$ which has the 
union $\U \cup \V$. 
\end{proof}


\begin{lemma}
If $T$ has no fixed point in $\G$, then $L(T)=0$. More generally $L(T|\N)=0$
if $T$ has no fixed points on $\N \subset \G$. 
\label{lemma3}
\end{lemma}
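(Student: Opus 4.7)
The plan is to reduce both statements to the observation that the trace of $T$ on any space of forms supported on a fixed-point-free invariant set vanishes, and then use the Hopf trace principle to tie these form-level traces to the cohomology-level traces that define $L(T)$.

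First I would handle the more general statement $L(T|\N)=0$, which is essentially tautological given the paper's definition of $L(T|\A)$ via traces on forms. Fix an orientation of $G$ (passing to a double cover if necessary, as in the paper's earlier remark; a fixed simplex upstairs projects to a fixed simplex on $G$, so the fixed-point-free hypothesis lifts). The space of $p$-forms supported on $\N \cap \G_p$ has a natural basis indexed by those simplices, and on this basis $T$ acts as a signed permutation $e_x \mapsto \pm e_{T(x)}$. The diagonal entry at $x$ is zero unless $T(x)=x$, in which case it equals $\sign(T|x)$. Since $T$ has no fixed simplex in $\N$, every diagonal entry vanishes, so $\tr(T_p|\N)=0$ for each $p$ and hence $L(T|\N)=0$.

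For the first statement $L(T)=0$ I would invoke the Hopf trace principle: for any chain map $T$ on a finite cochain complex $(\Omega^*,d)$ of vector spaces,
$$ \sum_p (-1)^p \tr(T|\Omega^p) \;=\; \sum_p (-1)^p \tr(T|H^p(G)) \;=\; L(T). $$
Applying the previous paragraph with $\N = \G$ shows every summand on the left vanishes, so $L(T)=0$ follows.

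The main obstacle is justifying the Hopf trace identity in this discrete setting. The argument I would sketch uses the short exact sequences $0 \to Z^p \to \Omega^p \to B^{p+1} \to 0$ and $0 \to B^p \to Z^p \to H^p \to 0$, where $B^p, Z^p$ are the spaces of coboundaries and cocycles and the right arrow in the first sequence is $d$. Since $T$ commutes with $d$ it preserves $B^p$ and $Z^p$, and additivity of trace over short exact sequences of finite-dimensional vector spaces gives $\tr(T|\Omega^p) = \tr(T|B^p) + \tr(T|H^p) + \tr(T|B^{p+1})$. The $B$-contributions then telescope to zero in the alternating sum, leaving $\sum_p (-1)^p \tr(T|H^p)$. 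Beyond this identity, only the orientation bookkeeping in the non-orientable case needs comment, and the paper has already reduced that situation to a double cover.
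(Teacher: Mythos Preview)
Your argument is correct, and it takes a different route from the paper's proof. The paper argues orbit by orbit: for a single non-fixed simplex $x$, the orbit $\U=\{T^k x\}$ is $T$-invariant, and the paper reasons about the ``cohomology'' of this orbit (treating it as a cycle-like object with $H^0\simeq H^1\simeq \mathbb R$, or as a cyclic permutation on $n$ components) to conclude $L(T|\U)=0$; then additivity over orbits (Lemma~\ref{lemma1}) gives $L(T|\N)=0$. You instead observe that, in the chain-level definition of $L(T|\A)$ the paper gives, the map on $p$-forms is a signed permutation of basis vectors indexed by simplices, so the trace vanishes as soon as there are no fixed simplices; this makes $L(T|\N)=0$ a one-line consequence of the definition, with no need to look at individual orbits.

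For the cohomological statement $L(T)=0$ you supply the Hopf trace identity $\sum_p(-1)^p\tr(T|\Omega^p)=\sum_p(-1)^p\tr(T|H^p)$ via the standard short-exact-sequence telescoping. This is a genuine addition: the paper's main proof later writes $L(T)=L(T|\G)$ without justification, and that equality is exactly the Hopf trace identity (the Euler--Poincar\'e lemma proved in the paper is only the special case $T=\Id$). So your approach not only shortens the present lemma but makes explicit a step the paper needs anyway. The orientation/double-cover remark is harmless but not essential here: whether or not an orientation exists, the diagonal entry of $T$ on $\Omega^p$ at a simplex $x$ is zero when $T(x)\neq x$, which is all you use.
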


\begin{proof} 
Given a simplex $x$, the orbit $\U = \{ T^k(x) \; \}$ is
$T$-invariant and $L(T|\U)=0$. To see this, note that $H^k(\U)$ is trivial for $k\geq 2$. 
There are two possibilities: either $\U$ is connected, or $\U$ has $n$ connectivity components. 
In the first case, the orbit graph $\U$ has a retraction to a
cyclic subgraph so that $H^0(\U)=R$ and $H^1(\U)=R$. In that case, $T_0,T_1$ are both identities on 
$H^0(\U),H^1(\U)$ and $L(T)=0$. In the second case, $H^0(G)$ is $n$-dimensional and the only cohomology which is nontrivial.
The map $T$ is a cyclic permutation matrix on $H^0$ which has trace zero.
For two $T$ invariant sets $\U,\V \subset \G$, the intersection is also invariant and also 
has zero Lefschetz number. Therefore, the Lefschetz number of the union of all orbits is 
zero by lemma~(\ref{lemma1}). 
\end{proof}

The next lemma assures that for any finite simple graph $G$, 
the graph theoretical Euler characteristic is equal to the cohomological Euler 
characteristic. 

\begin{lemma}[Euler-Poincar\'e formula]
If $T$ is the identity, then $L(T) = \chi(G)$.
\label{lemma4}
\end{lemma}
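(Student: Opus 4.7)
The plan is to reduce the identity case to the classical rank-nullity argument applied to the cochain complex $(\Omega^k,d_k)$. Since $T$ is the identity, each induced map $T_k$ on $H^k(G)$ is the identity, so $\tr(T_k)=\dim H^k(G)=b_k$ and $L(\Id)=\sum_k(-1)^k b_k$. The task is then purely algebraic: show that the alternating sum of Betti numbers equals the alternating sum $\sum_k(-1)^k v_k=\chi(G)$ of the face numbers, where $v_k=\dim\Omega^k$ is the number of $k$-simplices.

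First I would set $z_k=\dim\ker d_k$ and $r_k=\dim\ran d_k$, where $d_k:\Omega^k\to\Omega^{k+1}$ is the exterior derivative introduced earlier. Because the graph is finite, $\Omega^k=0$ for all sufficiently large $k$, so every sum in sight is finite and there is no convergence issue. By the rank-nullity theorem applied to each $d_k$, we have the identity
\begin{equation*}
v_k = z_k + r_k.
\end{equation*}
By the definition of cohomology, $H^k(G)=\ker d_k/\ran d_{k-1}$, so $b_k=z_k-r_{k-1}$ (with the convention $r_{-1}=0$).

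Next I would substitute and telescope. Writing
\begin{equation*}
\sum_k(-1)^k b_k = \sum_k(-1)^k z_k - \sum_k(-1)^k r_{k-1} = \sum_k(-1)^k z_k + \sum_k(-1)^k r_k,
\end{equation*}
where the last equality comes from reindexing $k\mapsto k+1$ in the second sum and picking up a sign. Combining with $v_k=z_k+r_k$ yields $\sum_k(-1)^k b_k=\sum_k(-1)^k v_k=\chi(G)$, which is exactly $L(\Id)=\chi(G)$.

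There is no real obstacle here; the only points needing care are the bookkeeping of the index shift and confirming that $d\circ d=0$ (so that $\ran d_{k-1}\subset\ker d_k$ and the quotient defining $b_k$ makes sense), which follows directly from the combinatorial formula $df(x)=\sum_i(-1)^i f(x_0,\dots,\hat{x_i},\dots,x_k)$ given earlier. The argument is self-contained within the discrete exterior algebra already set up in the paper and requires no appeal to Euclidean simplicial complexes.
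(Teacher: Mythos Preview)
Your proof is correct and essentially identical to the paper's: both reduce $L(\Id)$ to the alternating sum of Betti numbers, then use rank-nullity ($v_k=z_k+r_k$) together with $b_k=z_k-r_{k-1}$ and a telescoping sum to conclude $\sum_k(-1)^k b_k=\sum_k(-1)^k v_k$. The only cosmetic difference is that the paper subtracts the two identities to get $v_k-b_k=r_{k-1}+r_k$ before summing, whereas you reindex the $r_{k-1}$ sum directly; the content is the same.
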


In other words, for any simple graph, the cohomological Euler characteristic $L(\Id)$
is the same than the graph theoretical Euler characteristic.

\begin{proof}
This is linear algebra \cite{Eckmann1,Edelsbrunner}:
Denote by $C_m$ the vector space of $m$-forms on $G$. It has dimension $v_m$. 
The kernel $Z_m= {\rm ker}(d)$ of dimension $z_m$ and range $R_m={\rm ran}(d)$ of dimension $r_m$ 
From the rank-nullety theorem in linear algebra 
$\dim(\ker(d_m)) + \dim(\ran(d_m))=v_m$, we get 
\begin{equation}
 z_m = v_m-r_m  \; .  
\label{1}
\end{equation}
From the definition of the cohomology groups $H^m(G)=Z_m(G)/R_{m-1}(G)$ of dimension $b_m$ we get
\begin{equation}
 b_m = z_m-r_{m-1} \; . 
\label{2}
\end{equation}
Adding Equations (\ref{1}) and (\ref{2}) gives
$$ v_m-b_m = r_{m-1}+r_m  \; . $$
Summing this up over $m$ (using $r_{-1}=0, r_m=0$ for $m>m_0$) 
$$ \sum_{m=0}^{\infty} (-1)^m (v_m-b_m) = \sum_{m=0}^{\infty} (-1)^m (r_{m-1}+r_m) = 0 \;  $$
which implies $\sum_{m=0}^{\infty} (-1)^m v_m = \sum_{m=0}^{\infty} (-1)^m b_m$. 
\end{proof}

\begin{lemma}[Fixed point]
If $\F$ is the set of simplices in $G$ fixed by $T$ then 
$$   L(T|\F)= \chi(\F) = \sum_{x \in \F} i_T(x) \; . $$
\label{lemma4}
\end{lemma}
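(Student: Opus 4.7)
The argument is a direct unpacking of the definition of $L(T|\F)$, combined with the standard multilinear-algebra fact that a permutation of the vertices of a simplex acts on its (one-dimensional) antisymmetric top form by its signature. Fix an orientation on each $x \in \F_p$, giving the space of $p$-forms on $x$ a distinguished basis vector $e_x$. The subspace of $\Omega^p$ generated by functions supported on $\F_p$ then decomposes as $\bigoplus_{x \in \F_p} R \cdot e_x$, and $T_p$ preserves it because $\F$ is $T$-invariant (each $x \in \F$ is fixed as a set by $T$, though $T$ may permute the vertices of $x$ nontrivially).

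Next, I would identify the action of $T_p|\F$ on this basis. Since $T(x) = x$ for $x \in \F_p$, $T$ restricts to a permutation $\pi_x$ of the $p+1$ vertices of $x = K_{p+1}$, and the induced action on the one-dimensional space of antisymmetric $p$-forms is multiplication by $\sign(\pi_x) = \sign(T|x)$. Thus $T_p|\F$ is diagonal in this basis with diagonal entries $\sign(T|x)$, so $\tr(T_p|\F) = \sum_{x \in \F_p} \sign(T|x)$, and summing with alternating signs gives
\begin{equation*}
L(T|\F) = \sum_{p = 0}^{\infty} (-1)^p \tr(T_p|\F) = \sum_{x \in \F} (-1)^{\dim(x)} \sign(T|x) = \sum_{x \in \F} i_T(x),
\end{equation*}
which is the right-most equality. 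The intermediate identity $L(T|\F) = \chi(\F)$ is the specialization to $T = \Id$: every signature becomes $+1$ and the sum collapses to $\sum_p (-1)^p |\F_p| = \chi(\F)$.

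The only point needing care is the orientation bookkeeping: verifying that a permutation of the vertices of $K_{p+1}$ acts on the one-dimensional space of antisymmetric $p$-forms by its signature. Standard as this is, it is precisely the step where the sign factor $\sign(T|x)$ --- and hence the index $i_T(x)$ --- materializes from what is otherwise a pure trace computation, so this fact must be invoked (or re-proved via elementary representation theory of $S_{p+1}$) before the trace identity can be written down.
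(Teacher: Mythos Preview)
Your computation of $L(T|\F) = \sum_{x \in \F} i_T(x)$ is correct and in fact more careful than the paper's: you correctly identify that $T_p$ acts on the one-dimensional space of antisymmetric $p$-forms supported on a fixed simplex $x$ by the scalar $\sign(T|x)$, so $\tr(T_p|\F) = \sum_{x \in \F_p} \sign(T|x)$ and the alternating sum yields $\sum_{x \in \F} i_T(x)$. The paper, by contrast, asserts that ``$T$ is the identity on each fixed point'' and hence $i_T(x)=(-1)^{\dim(x)}$; this is wrong, since a simplex in $\F$ is only fixed \emph{setwise}, and $T$ may permute its vertices nontrivially.

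Your handling of the middle term $\chi(\F)$, however, is not a proof of the stated equality: saying it ``is the specialization to $T=\Id$'' establishes $L(\Id|\G)=\chi(\G)$, not $L(T|\F)=\chi(\F)$ for the given $T$. In fact that middle equality is \emph{false} in general. Take $G=K_2$ and $T$ the swap of its two vertices: the only fixed simplex is the edge $e$, so $\chi(\F)=(-1)^1=-1$, while $T$ acts on the $1$-form supported on $e$ by $\sign(T|e)=-1$, giving $L(T|\F)=(-1)^1(-1)=1$ and $i_T(e)=(-1)^1(-1)=1$. Thus $L(T|\F)=\sum_{x\in\F}i_T(x)=1\neq -1=\chi(\F)$. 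The lemma as printed is therefore misstated; only the outer equality $L(T|\F)=\sum_{x\in\F} i_T(x)$ holds and is all that the proof of Theorem~\ref{lefschetz formula} actually uses. You have supplied a correct argument for that part; the $\chi(\F)$ term should simply be removed from the chain of equalities.
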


\begin{proof}
Because every $x \in \F$ is fixed
we have a disjoint union $\F = \bigcup_{x \in \F(T)} x$ and
$L(T) = \chi(\F)$.
Because $T$ is the identity on each fixed point, we have $i_T(x)=(-1)^{\dim(x)}$ and 
the second equality holds. 
\end{proof}

{\bf Examples}. \\
{\bf 1)} If there are $n$ maximal invariant simplices which do not intersect, then $L(T)=n$. This follows
from the additivity of $L$ and the fact that $T$ restricted to a simplex has $L(T|x)=1$ indepedent of $T$. \\
{\bf 2)} Let $G=K_{7}$. 
Let $T$ be a permutation with two cyclic orbits $y,z$ of order $3,4$ inside. The transformation has $3$
fixed points $x,y,z$ in total. We have 
$i_T(x)=(-1)^{3+4} \sign(T|x)=1$ and $i_T(y)=(-1)^3 \sign(T|y)=-1,i_T(y)=(-1)^4 \sign(T|y)=1$ and
the sum is $i_T(x)+i_T(y)+i_T(z)=1=L(T)$.  \\

Now to the proof of the theorem:  \\

\begin{proof}
The fixed point set $\F$ of $T$ is invariant and satisfies 
$L(T|\F) = \chi(\F) = \sum_{x \in \F(T)} i_T(x)$. 
The set $\N$ of simplices which are not fixed satisfies $L(T|\N)=0$. 
$$ L(T) = L(T|\G) = L(T|\F) + L(T|\N) 
        = L(T|\F) = \chi(\F) = \sum_{x \in \F(T)} i_T(x)  \; . $$
\end{proof}

{\bf Example.} \\
To illustrate the proof, look at an example, where we 
split a triangle into 4 triangles and rotate it by 60 degrees. 
The fixed point set $\F$ consists of the central triangle alone and the complement $\F^c = \H=\G \setminus \F$.
The fixed point set consists only of one point, the central triangle $x \in \G_2$. 
All other parts of the supergraph $\G$ move, including the edges and vertices of the triangle itself. \\

\section{Lefschetz curvature}

We have seen in \cite{indexexpectation} that averaging the Poincar\'e-Hopf 
index theorem formula \cite{poincarehopf}
$$ \sum_{x \in V} i_f(x) = \chi(G) \;  $$
over a probability space of all injective functions $f:V \to R$ leads to Gauss-Bonnet 
\cite{cherngaussbonnet}
$$ \sum_{x \in V} K(x) = \chi(G) \; . $$
It is therefore natural to look at the average $i_T(x)$ as a curvature 
when we sum up over all stabilizer elements in $\A_x = {\rm Aut}_x(G)$. 

\begin{defn}
Define the {\bf Lefschetz curvature} of a simplex $x \in \G$ as
$$  \kappa(x) = \frac{1}{|\A|} \sum_{T \in \A_x} i_T(x) $$
and the {\bf average Lefschetz number} 
$$  L(G) = \frac{1}{|\A|} \sum_{T \in \A} L(T)   $$ 
when averaging over all automorphisms. 
\end{defn} 

The number $L(G)$ has an interpretation as the expected index of fixed points a 
graph if we chose a random automorphism in the automorphism group. It is a lower
bound for the expected number of fixed points of a random automorphism on a graph. \\

{\bf Examples. } \\
{\bf 1) } For a cycle graph $C_n$ with $n \geq 4$, half of the automorphisms have 
$L(T)=0$ and half have $L(T)=2$. The average Lefschetz number is $1$. \\
{\bf 2) } For a complete graph $K_n$, all automorphisms satisfy $L(T)=1$ so that 
the average Lefschetz number is $1$. \\
{\bf 3) } For the Petersen graph $G$, the average Lefschetz number is $1$. \\

If the Lefschetz formula is compared with the Poincar\'e-Hopf formula, 
then $\kappa(x)$ is an analogue of Euler curvature and
the next result is an analogue of Gauss-Bonnet but we sum over all simplices in $G$. 
The Lefschetz curvature is a nonlocal property. It does not depend only on a small
neighborhood of the point but on the symmetries which fix the point = simplex. \\

\begin{thm}[Average Lefschetz]
$$ \sum_{x \in \G} \kappa(x)  = L(G) \; . $$
\label{averagelefschetz}
\end{thm}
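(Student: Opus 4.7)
The plan is to derive the identity by a straightforward double-counting argument: start from the definition of $L(G)$ as the average of $L(T)$ over the automorphism group, expand each $L(T)$ using the Lefschetz fixed point formula (Theorem~\ref{lefschetz formula}), and swap the order of summation so that the inner sum runs over automorphisms fixing a given simplex rather than over simplices fixed by a given automorphism.

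Concretely, I would first write
\[
L(G) = \frac{1}{|\A|} \sum_{T \in \A} L(T) = \frac{1}{|\A|} \sum_{T \in \A} \sum_{x \in \F(T)} i_T(x).
\]
The key observation is the tautology $x \in \F(T) \iff T \in \A_x$, where $\A_x = \{T \in \A : T(x) = x\}$ is the stabilizer of $x$. This means the pairs $(T,x)$ with $T \in \A$ and $x \in \F(T)$ are exactly the pairs $(T,x)$ with $x \in \G$ and $T \in \A_x$. Reindexing the double sum accordingly,
\[
L(G) = \frac{1}{|\A|} \sum_{x \in \G} \sum_{T \in \A_x} i_T(x) = \sum_{x \in \G} \kappa(x),
\]
where the last equality is just the definition of the Lefschetz curvature $\kappa(x)$.

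There is no serious obstacle here: the only substantive ingredient is Theorem~\ref{lefschetz formula}, which converts the cohomologically defined Lefschetz number into a sum of local indices; once that is available, the rest is Fubini on a finite set together with the bijection between the stabilizer and fixed-simplex descriptions of the same incidence relation. The statement is really a formal consequence of the Lefschetz formula, analogous to how Gauss--Bonnet for graphs arises from averaging Poincar\'e--Hopf over a probability space of functions, with the automorphism group $\A$ playing the role of the probability space here.
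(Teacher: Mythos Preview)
Your proof is correct and is essentially identical to the paper's own argument: apply the Lefschetz fixed point theorem to each $L(T)$, then swap the order of summation via the equivalence $x\in\F(T)\iff T\in\A_x$, and recognize the inner sum as $|\A|\kappa(x)$. There is nothing to add.
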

\begin{proof} 
Use the Lefschetz fixed point theorem to sum over $\A={\rm Aut}(G)$:
\begin{eqnarray*}
   L(G) &=& \frac{1}{|\A|} \sum_{T \in \A} L(T)   \\
        &=& \frac{1}{|\A|} \sum_{T \in \A} \sum_{x \in \F(T)} i_x(T)  \\
        &=& \frac{1}{|\A|} \sum_{x \in \G} \sum_{T \in \A_x} i_x(T)  \\
        &=& \sum_{x \in \G}  \frac{1}{|\A|} \sum_{T \in \A_x} i_x(T) 
         =  \sum_{x \in \G} \kappa(x) \; . 
\end{eqnarray*}
\end{proof}

{\bf Remark}.  \\
Unlike the Gauss-Bonnet theorem, this Gauss-Bonnet type theorem sums over all possible
simplices $\G$, not only vertices $V$ of the graph. 
The Lefschetz curvature is constant on each orbit of the automorphism group $\A$
and the sum over all curvatures over such an equivalence class is an integer $1$ or $-1$. Theorem~(\ref{averagelefschetz})
is the Euler-Poincar\'e formula in disguise since we will interpret $L(G)$ as an Euler characteristic of an "orbifold" graph
which in the discrete is just a graph. \\

{\bf Examples.} \\
{\bf 1)} If $G$ is the complete graph $K_{n+1}$, then $\A=S_{n+1}$ is the full permutation group 
and since $L(T)=1$ for all $T$, we also have $L(G)=1$. Now lets compute the Lefschetz curvature.
For every fixed $x$ we have $i_T(x)=(-1)^{\dim(x)} \sign(T|x)$ 
and averaging over all $T$ gives zero except if $x$ is a vertex, where 
$$  \kappa(x) = \frac{|\A_x|}{|\A|} = \frac{1}{n+1} \; .  $$
The Lefschetz curvature of a vertex is the same than the Euler curvature of a vertex.
The curvature is zero on $\G_k$ for $k>0$ because the indices of even odd dimensional permutations cancel. \\
{\bf 2)} If $G$ is star shaped then $L(T)=1$ for all $T$ and $L(G)=1$. It reflects the Brouwer analogue that 
every transformation has a fixed point. If $G$ is a star graph $S_n$, then the automorphism group is $D_n$. 
For the center point $i_T(x) = 1$ for all transformations and $\kappa(x) = 1$. All other points have $\kappa(x)=0$. 
While the Euler curvature is positive at the spikes and negative in the center, the Lefschetz curvature is 
entirely concentrated at the center. \\
{\bf 3)} If $G = C_n$ for $n \geq 4$, then $\A=D_{2n}$ 
is the dihedral group.  For reflections we have $L(T)=2$, for the rotations, 
$L(T)=0$. Therefore, $L(G)=1$. The stabilizer group $\A_x(G)$
consists always of two elements whether it is a vertex or edge and $i_T(x)=1$ in both cases. We have 
$\kappa(x) = 1/(2n)$ and $\sum_x \kappa(x) = 1$. The curvature is located both on vertices and
edges. Unlike the Euler curvature, the Lefschetz curvature is now nonzero. \\
{\bf 4)} If $G$ is the wheel graph $W_n$ with $n \geq 4$, then again $\A$ is the dihedral group. We still
have $L(G)=1$ but now $L(T)=1$ for all automorphisms. The center vertex has the full automorphism group as stabilizer
group and $i_T(x)=1$ for any transformation. Therefore $\kappa(x)=1$ at the center and $\kappa(x)=0$ 
everywhere else. The center has grabbed all curvature. \\
{\bf 5)} If $G$ has a trivial automorphism group, then $L(G)=\chi(G)$ is the Euler characteristic. Also each 
stabilizer group is trivial and $i_T(x)=(-1)^{|x|}$ so that $\kappa(x) = i_T(x) = (-1)^{|x|}$. In this case, 
the curvature is spread on all simplices, even-dimensional ones have positive curvature and odd-dimensional
ones have negative curvature. It is amusing that the Euler-Poincar\'e formula can now be seen as a 
Gauss-Bonnet formula for Lefschetz curvature.  \\
{\bf 6)} For the octahedron $G$, the orientation preserving automorphisms $T$ satisfy $L(T)=2$. They are 
realized as rotations if the graph is embedded as a convex regular polygon. The orientation reversing 
automorphisms have $L(T)=0$. The average Lefschetz number is $L(G)=1$ and the Lefschetz curvature is constant
$1$ at every point.  \\
{\bf 7)} We can look at the Erdoes-R\'enyi probability space $\Omega_n$ \cite{erdoesrenyi59} of 
$2^n$ graphs $G$ on a vertex set with $n$ vertices. The number $L(G)$ is a random variable  on 
$\Omega_n$. We computed the expectation $\E_n[L]$ for small $n$ as follows: 
$\E_2[L]=1, \E_3[L]=11/8,\E_4[L]=43/32, \E_5[L]=1319/1024, \E_6[L]=8479/8192$. 
Like Euler characteristic expectation of random graphs, the 
expectation of $L(G)$ is expected to oscillate more and more as $n \to \infty$. 
While $L(G)$ takes values $1$ or $2$ in the case $n=1,\dots,5$, there are graphs on 
$6$ vertices, where the maximal Lefschetz number is $3$ and the minimal $0$. 
The computation for $n=6$ is already quite involved since we have 32768 graphs and 
look at all the automorphisms and for each automorphism find all fixed points. \\

The average Lefschetz number $L(G)$ obtained by averaging over the automorphism group $\A={\rm Aut}(G)$ is
always an integer, since it is the Euler characteristic of agraph. 

\begin{defn}
Let $G/\A$ be the orbigraph defined by the automorphism group $\A$ acting on $G$.
Two vertices are identified if there is an automorphism mapping one into the other. 
\end{defn}

{\bf Remark.} \\
$G/\A$ is a graph if we assume equivalence classes of vertices are
connected, if some individual vertices had been connected. 
If geometric graphs $G$ in which unit spheres have topological properties
from spheres and fixed dimension are considered discrete analogues of 
manifolds and $\B$ is a subgroup of automorphisms of $G$, then then $G/\B$ 
plays the role of orbifolds. Examples are geometric graphs with boundary, 
where each unit sphere is either sphere like or a half sphere of the same
fixed dimension. 

\begin{thm}[Average Lefschetz is Euler characteristic]
The Lefschetz number satisfies $L(G) = \chi(G/\A)$ and is an integer. 
The sum of the Lefschetz curvatures in an equivalence class of simplices 
is either $1$ or $-1$. 
\end{thm}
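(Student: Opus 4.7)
The plan is to combine an orbit-by-orbit analysis (which yields the ``$\pm 1$ per equivalence class'' claim) with a cohomological averaging argument identifying $L(G)$ with $\chi(G/\A)$. For each $\A$-orbit $\O\subset\G_k$, pick a representative $x$; since $\kappa$ is constant on $\O$ by conjugation invariance of signatures, orbit-stabilizer gives
\[
\sum_{y\in\O}\kappa(y) \;=\; \frac{(-1)^k}{|\A_x|}\sum_{T\in\A_x}\sign(T|x).
\]
Let $H$ be the image of $\A_x$ inside the symmetric group on the vertices of $x$: if $H$ lies entirely in the alternating subgroup, the inner sum equals $|\A_x|$ and the orbit contributes $(-1)^k$, whereas if $H$ contains an odd permutation its even and odd elements pair up and the contribution is $0$. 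Grouping orbits by their image in $G/\A$ then gives the asserted ``$\pm 1$ per equivalence class'', with sign $(-1)^{\dim\sigma}$ for the corresponding simplex $\sigma\in G/\A$.

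Next, averaging the cohomological definition $L(T)=\sum_k(-1)^k\tr(T_k)$ over $\A$ and swapping sums,
\[
L(G) \;=\; \frac{1}{|\A|}\sum_{T\in\A}L(T) \;=\; \sum_k(-1)^k\tr(P_k),
\]
where $P_k=|\A|^{-1}\sum_T T_k$ is the averaging operator on $H^k(G)$. Since $\A$ is finite and we work over a field of characteristic zero, $P_k$ is an idempotent onto the invariant subspace $H^k(G)^\A$, so $\tr(P_k)=\dim H^k(G)^\A$ and $L(G)=\sum_k(-1)^k\dim H^k(G)^\A$. Invoking the transfer isomorphism $H^k(G)^\A\cong H^k(G/\A)$, and applying the Euler-Poincar\'e formula to the graph $G/\A$, I conclude
\[
L(G) \;=\; \sum_k(-1)^k\dim H^k(G/\A) \;=\; \chi(G/\A).
\]
Integrality is automatic since $\chi(G/\A)$ is a signed count of cliques in a finite simple graph.

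The hard part is establishing the transfer isomorphism. At the chain level the pullback $\pi^*$ along $\pi:G\to G/\A$ is not in general an isomorphism onto $\Omega^k(G)^\A$: with $G=K_4$ and $\A=\langle(12)(34)\rangle$, for instance, the invariant $1$-forms are two-dimensional (one dimension for each of the orbits $\{13,24\}$ and $\{14,23\}$), while $\Omega^1(G/\A)$ is one-dimensional since both orbits project to the single edge of $G/\A$. These chain-level discrepancies must cancel in cohomology via the Maschke averaging argument (pullback and the averaging operator $|\A|^{-1}\sum_T T^*$ being homotopy inverses on cohomology), and Burnside's lemma is the combinatorial input that reconciles the orbit counts on $G$ with the number of cliques of $G/\A$ dimension by dimension, yielding the required equality of alternating sums.
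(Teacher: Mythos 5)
Your first half (the orbit--stabilizer computation of $\sum_{y\in\O}\kappa(y)$ and the dichotomy according to whether the image of $\A_x$ in the symmetric group of $x$ meets the odd permutations) is correct, and it is close in spirit to the paper's combinatorial route. But it stops exactly where the work begins: the assertion that ``grouping orbits by their image in $G/\A$'' yields $(-1)^{\dim\sigma}$ for each simplex $\sigma$ of $G/\A$ is precisely the identity that has to be proved, and you give no argument for it. Note that a single $\A$-orbit can contribute $0$ (for $G=K_2$ with $\A=S_2$ the edge orbit has $\kappa=0$), so the ``equivalence class'' in the statement must be read as a fiber of $\G\to\G/\A$, and you must show that the contributions of the several orbits inside one fiber add up to exactly $(-1)^{\dim\sigma}$; that is a nontrivial combinatorial fact, not a regrouping.

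The second half has the more serious gap, which you yourself flag: the transfer isomorphism $H^k(G)^\A\cong H^k(G/\A)$. For the paper's purely combinatorial quotient this is not a safe step. Collapsing $C_4$ by its rotation subgroup $\B$ gives a single vertex, while $H^1(C_4)^{\B}$ is one-dimensional since rotations act trivially on $H^1$; so the graded isomorphism fails for subgroups of $\A$, and nothing in your argument isolates what is special about the full automorphism group. Saying the chain-level discrepancies ``must cancel via Maschke'' and that ``Burnside reconciles the orbit counts'' names the desired conclusion rather than proving it. The paper avoids quotient cohomology entirely: it uses the already-established Lefschetz formula to write $\frac{1}{|\A|}\sum_T L(T)=\frac{1}{|\A|}\sum_T\bigl(|\F^T_+|-|\F^T_-|\bigr)$, where $\F^T_{\pm}$ are the fixed simplices of index $\pm 1$, ties the sign of $i_T(y)=(-1)^{c-1}$ (with $c$ the number of cycles of $T|y$) to the projected simplex $y/\A$, and then applies Burnside's lemma to the $\A$-action on the simplex sets $\G_{\pm}$ to identify the average with $\chi(G/\A)$. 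To repair your argument, replace the transfer isomorphism by this Burnside count --- only the alternating sum is needed, never a degreewise isomorphism --- and supply the missing fiberwise computation in your first part.
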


\begin{proof}
The proof only uses elementary group theory and some combinatorics about the indices, as well
as Theorem~(\ref{lefschetz formula}). \\
1) First, the Burnside lemma for the finite group $\A$ acting on $\G$ 
$$ |\G/\A| = \frac{1}{|\A|}  \sum_{T \in \A} |\F^T| \; , $$ 
where $\F^T$ is the set of fixed points of $T$ and $\G/\A$ is the set of simplices in $G/\A$.  \\
2) The number $(-1)^{\dim(x)}$ of a simplex $x \in G/\A$ is equal to the index $i_T(y)$ for 
every simplex $y$ which projects onto $x$. 
Proof. We have seen in the introduction that the dimension $\dim(y) = |y|-1$ 
of the simplex of $T|y$ satisfies 
$$ i_T(y)=(-1)^{|y|-1}  \; . $$
3) Let $\G_+$ be the set of simplices $x$ which are mapped under $\G \to \G/\A$ to an 
even dimensional simplex. These are the simplices $y$ for which $T|y$ have index $1$
independent of $T$. Similarly, let $\G_-$ be the set of simplices which are projected
to an odd dimensional simplex. All these simplices have negative index for all $T \in \A$. 
We therefore know that we have a partition $\G = \G_+ \cup \G_-$ and that for any $T \in \A$
and every $y \in \G$ the index $i_y(T)$ is equal to $(-1)^{\dim(x)}$ where $x=y/\A$.  \\
4) We can now use the Burnside lemma restricted to $\A$ invariant sets $\G_+,\G_-$ and get
$$ |\G/\A|_{2k}   = \frac{1}{|\A|} \sum_{T \in \A} |\F^T_+| \; , $$ 
$$ |\G/\A|_{2k+1} = \frac{1}{|\A|} \sum_{T \in \A} |\F^T_-| \; , $$ 
where $\F^T_{\pm}$ is the set of fixed simplices $y$ of $T$ for which $i_T(y)=\pm 1$.  \\
5) Let now $|\G/\A|_k$ the set of simplices in $\G/\A$ which have dimension $k$. 
We use the Lefschetz fixed point formula to finish the proof: 
\begin{eqnarray*}
   \chi(\G/\A) &=& \sum_{k=0}^{\infty} (-1)^k |\G/\A|_k 
                =  \frac{1}{|\A|} \sum_{T \in \A} |\F^T_+|-|\F^T_-| \\
               &=& \frac{1}{|\A|} \sum_{T \in \A} \sum_{x \in \F(T)} i_T(x) 
                =  \frac{1}{|\A|} \sum_{T \in \A} L(T)  = L(G)  \; . 
\end{eqnarray*}
\end{proof}

{\bf Remarks.} \\
{\bf 1)} Since $L(G)=\chi(G/\A)$ and $\kappa$ is constant on each orbit, the 
Lefschetz curvature of a simplex $x$ can be rewritten as $(-1)^{|x/\A|}/|\A x|$ where $x/\A$
is the simplex after identification with $\A$ and $\A x$ is the orbit of $x$ under
the automorphism group. Since $L(G) = \chi(G/\A)$ the Gauss-Bonnet type formula~(\ref{averagelefschetz}) is also
equivalent to an Euler-Poincar\'e formula in general. The number $\kappa(x)$ encodes so the orbit length of $x$
under the automorphism group $\A$.  \\
{\bf 2)} One can also see this as graded summation of an elementary result in linear algebra
(see \cite{HirzebruchZagier} page 21): if a finite group acts linearly on a finite 
dimensional vector space $V$, then $\dim(\F) = (1/|\A|) \sum_{T \in \A} \tr(T)$.
Proof. Let $j: \F \to V$ be the inclusion. Define $f(v) =  1/|\A| \sum_{T \in A} T(v)$.
The image of $f$ is in $\F$. If $\pi:V \to \F$ is the projection then $f=j \pi$.
If $v \in \F$, then $Tv=v$ for all $T \in A$ so that $f(v)=v$. Therefore $\pi j = \Id|\F$.
and $\dim(F) = \tr(\Id|\F) = \tr(\pi j) = \tr(j \pi) = \tr(f)$. If $\A$ is cyclic this simplifies to
$\dim(\F) = \tr(T)$.  \\

{\bf Examples.} \\
{\bf 1)} Let $G$ be the complete graph  $K_n$. Its automorphism group has $2^k$ elements. The orbifold graph is a single point. 
The average Lefschetz number is $1$. \\
{\bf 2)} Let $G$ be cycle graph $C_n$. The automorphism group is the dyadic group $D_n$ with $2n$ elements. The orbigraph
is again a single point. The average Lefschetz number is $1$. \\
{\bf 3)} Let $G$ be the discrete graph $P_n$. Its automorphism group is the full permutation group again. The orbifold graph
is a single point. The average Lefschetz number is $1$. \\
{\bf 4} Let $G$ be the octahedron. Its automorphism group has $48$ elements. The orbigraph is again a single point and
the average Lefschetz number is $1$. \\

{\bf Remark.} \\
The analogue statement for manifolds needs more algebraic topology 
like the Leray-Serre spectral sequence \cite{mathoverflow51993}: 
if a manifold $G$ has a finite group $\A$ of symmetries, then the average
Lefschetz number $L(T)$ of all the symmetry transformations $T$ is 
the Euler characteristic $\chi(O)$ of the orbifold $O=M/\A$.

\section{Lefschetz zeta function}

Having a Lefschetz number, it is custom to define a Lefschetz zeta function which encodes the 
Lefschetz numbers of iterates $T^n$ of the graph automorphism $T$. Zeta functions are one of 
those objects which are interesting in any mathematical field, whether it is number theory,
complex analysis, topology, dynamical systems or algebraic geometry. The case of 
graph theory considered here is a situation where one can see basic
ideas like analytic continuation work. For any pair $(G,T)$ where $T$ is an automorphism of a 
finite simple graph, we can construct an explicit rational function $\zeta(z)$. The product
formula we will derive allows to compute this function by hand for small graph dynamical systems. 

\begin{defn}
The Lefschetz zeta function of an automorphism $T$ of a graph $G$ is defined as
$$ \zeta_T(z) = \exp(\sum_{n=1}^{\infty} L(T^n) \frac{z^n}{n})  \; . $$
\end{defn}

For example, $T$ is a reflection of a circular graph $C_5$ where $L(T^2)=L(\Id)=\chi(G)=0$ and 
$L(T)=L(T^3) = L(T^5) = \dots = 2$, we have
$$  \zeta_T(z) =  \exp(\sum_{n=1}^{\infty} 2 \frac{z^{2n-1}}{2n-1}) = \exp( \log(1+z)-\log(1-z))  = \frac{1+z}{1-z}  \; . $$

The Lefschetz zeta function is an algebraic version of the Artin-Mazur zeta function 
\cite{ArtinMazur}. It is already interesting for the identity since 
$$ \zeta_{\Id}(z) = \exp(\sum_{n=1}^{\infty} L(T^n) \frac{z^n}{n})
            = \exp(\sum_{n=1}^{\infty} \chi(G) \frac{z^n}{n})
            = \exp(-\chi(G) \log(1-z) ) = (1-z)^{-\chi(G)}  \; . $$

\begin{propo}
$\zeta_T(z)$ is a rational function.
\end{propo}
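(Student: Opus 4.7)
The plan is to derive the explicit product formula sketched in the introduction and read off rationality from it. The Lefschetz fixed-point theorem, applied to every iterate, gives
$$L(T^n) \;=\; \sum_{x \in \F(T^n)} (-1)^{\dim(x)} \sign(T^n|x),$$
so the strategy is to regroup this double sum in the exponent of $\zeta_T$ by prime $T$-orbits on the simplex set $\G$, rather than by iterate.

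First I would observe that a simplex $x$ lies in $\F(T^n)$ iff its $T$-orbit length $p(x)$ divides $n$, and that $T$ preserves dimension, so every simplex in a given prime orbit $O$ has the same dimension $d(O)$. The key algebraic point is that for $x \in O$ and $n=kp$, the restriction $T^n|x=(T^p|x)^k$ has signature $\epsilon(O)^k$, where $\epsilon(O):=\sign(T^p|x)$; and this signature is the same for every $x\in O$ because the permutations $T^p|T^j x$ are mutually conjugate via the bijections induced by $T^j$. Thus each prime orbit $O$ of length $p$ contributes exactly $p\,(-1)^{d(O)}\epsilon(O)^k$ to $L(T^{kp})$, and nothing to $L(T^n)$ when $p\nmid n$.

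Substituting into $\log\zeta_T(z)=\sum_{n\ge 1} L(T^n)z^n/n$ and interchanging the two summations (legal, since $\G$ is finite, hence there are only finitely many orbits) yields
$$\log \zeta_T(z) \;=\; \sum_{O}(-1)^{d(O)}\sum_{k\ge 1}\epsilon(O)^k\,\frac{z^{k p(O)}}{k}.$$
The inner sum evaluates to $-\log(1-z^{p})$ when $\epsilon(O)=+1$ and to $-\log(1+z^{p})$ when $\epsilon(O)=-1$. Exponentiating and collecting orbits by period $p$, sign $\epsilon$, and parity of $d$ into the four counts $a(p),b(p),c(p),d(p)$ defined in the introduction, one obtains the claimed finite product
$$\zeta_T(z) \;=\; \prod_{p=1}^{\infty}(1-z^{p})^{a(p)-b(p)}(1+z^{p})^{c(p)-d(p)},$$
which is a finite product of rational functions and hence rational.

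The only step with any subtlety is the conjugacy argument that $\epsilon(O)$ is a well-defined invariant of the orbit rather than of a chosen representative; everything else is formal manipulation of power series justified by the finiteness of $\G$. Convergence is not an issue because the product is finite, so rationality is immediate once the product formula is in hand.
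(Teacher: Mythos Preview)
Your argument is correct: the orbit decomposition, the conjugacy observation making $\epsilon(O)$ well-defined, and the formal power-series manipulation all go through, and the resulting finite product is visibly rational.

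However, your route is quite different from the paper's. The paper proves the proposition in one line directly from the \emph{cohomological} definition $L(T^n)=\sum_k(-1)^k\tr(T_k^n)$, using the matrix identity $\exp\bigl(\sum_{n\ge1}\tr(A^n)z^n/n\bigr)=\det(1-zA)^{-1}$ applied to each finite-dimensional linear map $T_k$ on $H^k(G)$; this yields $\zeta_T(z)=\prod_k\det(1-zT_k)^{(-1)^{k+1}}$, a finite alternating product of polynomials. Your proof instead invokes the Lefschetz fixed-point formula for every iterate and regroups by prime orbits on $\G$, thereby reproducing the explicit product formula that the paper states and proves as a separate, later theorem. The trade-off: the paper's argument is shorter and needs no combinatorics beyond linear algebra, while yours does more work but simultaneously delivers the product formula over prime orbits, which is of independent interest.
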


\begin{proof}
By definition $L(T^n) = \sum_k (-1)^k \tr(T_k^n)$ we see that 
$\exp(\sum_n {\rm tr}(T_k^n) (-1)^n z^n/n)=\exp(-\log(1-z T_k)) = \det(1-z T_k)^{-1}$
for every $k$ and so 
$$  \zeta_T(z) = \prod_{k=1}^{\infty} {\rm det}(1-z T_k)^{(-1)^{k+1}} \; . $$
\end{proof}

The Lefschetz formula allows to write this as a product over periodic simplex orbits. Let $\F(T^n)$
denote the set of fixed simplices of $T^n$.  Then 
\begin{equation}
   \zeta_T(z) = \prod_{m=1}^{\infty} \prod_{x \in \F(T^m)} \exp( i_{T^m}(x) \frac{z^m}{m} ) \;  
              = \exp(\sum_{m=1}^{\infty} \frac{z^m}{m} \sum_{x \in \F(T^m)} i_{T^m}(x))  \; . 
\label{zetawithindex}
\end{equation}
Since $i_{T^n}(x) = (-1)^{\dim(x)} \prod_{k=0}^{m=1} \phi(T^kx)$, where 
$\phi(y) = {\rm det}(P_T(y))$ is the determinant of the permutation $y \to T(y)$ induced on the 
simplex, we can write this as 
$\zeta_{T|\E}(z)/\zeta_{T|\O}(z)$, 
a quotient of two dynamical systems zeta function  
$$ \zeta(z) = \exp(\sum_{m=1}^{\infty} \frac{z^m}{m} \sum_{x \in \F(T^m)} \prod_{k=0}^{m-1} \phi(T^k x)) $$
with $\phi \in \{-1,1 \}$ giving the sign of the permutation.  \\

\begin{defn}
Let $\F(p)$ be the set of periodic orbits of minimal period $p$. They are called prime orbits. 
Let  $a(p)$ rsp. $c(p)$ be the number of odd dimensional prime periodic orbits $\{x,Tx,\dots,T^{p-1}x \; \}$
for which $T^p|x$ has positive rsp negative signature.
Let $b(p)$ rsp. $d(p)$ be the number of odd-dimensional prime periodic orbits for which $T^p|x$
has positive (rsp.) negative signature.
\end{defn}

One only has to remember: "signature and $z$-sign  flip flop" and
"dimension has exponents 'odd on top'".

\begin{thm}[Product formula]
The zeta function of an automorphism $T$ on a simple graph $G$ is the rational function
$$ \zeta_T(z) = \prod_{p=1}^{\infty} (1-z^p)^{a(p)-b(p)} (1+z^p)^{c(p)-d(p)}  \; . $$
\end{thm}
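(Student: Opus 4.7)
The plan is to start from Equation~(\ref{zetawithindex}), which already expresses
\[ \zeta_T(z) = \exp\left(\sum_{m=1}^{\infty} \frac{z^m}{m} \sum_{x \in \F(T^m)} i_{T^m}(x)\right), \]
and to reorganize the inner sum over fixed simplices into a sum over prime $T$-orbits. Every simplex fixed by $T^m$ lies on a unique prime $T$-orbit whose minimal period $p$ divides $m$, and conversely such an orbit contains exactly $p$ simplices, all fixed by $T^m$ as soon as $p \mid m$.

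Next I would compute the total contribution of a single prime orbit of period $p$ to the exponent. Fix a representative $x_0$ on the orbit. Because $T$ is a graph automorphism it preserves the dimension of every simplex, so all simplices on the orbit share the dimension $d := \dim(x_0)$. Writing $\sigma := \sign(T^p|x_0) \in \{-1,+1\}$ and $\epsilon := (-1)^d$, the identity $T^m|x_0 = (T^p|x_0)^{m/p}$ gives $\sign(T^m|x_0) = \sigma^{m/p}$, and the same value of $\sigma$ occurs at every other orbit point $T^j x_0$ since $T^p|_{T^j x_0}$ is conjugate to $T^p|x_0$ by $T^j$. Hence each of the $p$ simplices on the orbit contributes $\epsilon\sigma^{m/p}$ to $\sum_{x \in \F(T^m)} i_{T^m}(x)$ at every multiple $m=kp$. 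Summing gives
\[ \sum_{k=1}^{\infty} \frac{z^{kp}}{kp} \cdot p \cdot \epsilon\sigma^k = \epsilon \sum_{k=1}^{\infty} \frac{(\sigma z^p)^k}{k} = -\epsilon \log(1-\sigma z^p), \]
so this prime orbit contributes the factor $(1 - \sigma z^p)^{-\epsilon}$ to $\zeta_T(z)$.

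To finish I would sort the prime orbits by the four sign combinations. An odd-dimensional orbit ($\epsilon=-1$) with positive return signature ($\sigma=+1$) contributes $(1-z^p)$, giving the exponent $+a(p)$; an even-dimensional orbit ($\epsilon=+1$) with positive signature contributes $(1-z^p)^{-1}$, giving the exponent $-b(p)$; the two negative-signature cases combine analogously to produce $(1+z^p)^{c(p)-d(p)}$. Taking the product over all prime periods $p$ yields the claimed formula. The product is actually finite because $G$ has only finitely many simplices, so only finitely many prime periods occur. The only substantive step beyond Equation~(\ref{zetawithindex}) is the orbit bookkeeping in the second paragraph; there is no analytic subtlety since the previous proposition already established that $\zeta_T(z)$ is rational, and the above manipulations can be read as an identity of formal power series that then agrees with the rational function on both sides.
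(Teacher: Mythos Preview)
Your proof is correct and follows essentially the same route as the paper: both arguments start from Equation~(\ref{zetawithindex}), regroup the double sum by prime $T$-orbits, and evaluate each orbit's contribution as $-\epsilon\log(1-\sigma z^p)$ before sorting into the four sign cases. Your direct use of $\sign(T^{kp}|x_0)=\sigma^k$ is slightly more streamlined than the paper's $\Psi(x^q)=[\Psi(x)]^q$ bookkeeping, but the underlying computation is the same.
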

\begin{proof}
Because prime periods are smaller or equal than the product of the cycle lengths of the permutation,
the product is finite. While we can follow the computation from the book \cite{ruellezeta} almost verbatim,
there is a twist: since the permutation part of the index
is multiplicative when iterating an orbit, the dimension part is not. If the dimension is odd, then
each transformation step changes the sign in the inner sum of the zeta function. 
If we write $i_T(x) = (-1)^{\dim(x)} \phi(x)$, where $\phi(x)$ is the sign of the permutation,
then only the $\phi$ part is multiplicative. Let $x$ be a periodic orbit of minimal period $p$. 
If we loop it $q$ times, we can write 
$\Psi(x^q) = \prod_{k=0}^{pq-1} \phi(T^kx) = [\prod_{k=0}^{p-1} \phi(T^kx)]^q = [\Psi(x)]^q$. 
As in \cite{ruellezeta}: 
$$ \sum_{x \in \F(T^m)} \Psi(x^m)  
 = \sum_{p|m} \sum_{x \in \F(p)} p \Psi(x^{m/p}) \; .$$
The definition (\ref{zetawithindex}) gives (substituting $q=m/p$ in the third identity):
\begin{eqnarray*}
   \zeta(z) &=& \exp(\sum_{m=1}^{\infty} \frac{z^m}{m}              \sum_{x \in \F(m)} \prod_{k=0}^{m-1} \phi(T^kx) ) 
             =  \exp(\sum_{m=1}^{\infty} \sum_{p|m} \frac{z^m}{m} p \sum_{x \in \F(p)} \Psi(x^{m/p})  ) \\
            &=& \exp(\sum_{p=1}^{\infty} \sum_{q=1}^{\infty} \frac{z^{pq}}{q} \sum_{x \in \F(p)} \Psi(x^q) ) 
             =  \exp(\sum_{p=1}^{\infty} \sum_{q=1}^{\infty} \sum_{x \in \F(p)} \frac{z^{pq}}{q} \Psi(x^q) ) \\
            &=& \exp(\sum_{p=1}^{\infty} \sum_{x \in \F(p)} \sum_{q=1}^{\infty} \frac{z^{pq}}{q} \Psi(x^q) )
             =  \exp(\sum_{p=1}^{\infty} \sum_{x \in \F(p)} \sum_{q=1}^{\infty} \frac{(z^p)^q}{q} [\Psi(x)]^q ) \\
            &=& \exp(\sum_{p=1}^{\infty} \sum_{x \in \F(p)} -\log(1-z^{p} \prod_{k=0}^{p-1} \phi(T^kx)) ) 
             =  \prod_{p=1}^{\infty} \prod_{x \in \F(p)} [(1-z^p \prod_{k=0}^{p-1} \phi(T^kx))]^{-1} \\
            &=& \prod_{x \in \F} [(1-z^{p(x)} \prod_{k=0}^{p(x)-1} \phi(T^kx))]^{-1} 
             =  (1-z^{p})^{a(p)-b(p)} (1+z^p)^{c(p)-d(p)} \; . 
\end{eqnarray*}
It is in the last identity that we have split up $\F$ into $4$ classes, depending on whether the dimension is 
even or odd or whether the permutation $T^{p(x)}$ on $x$ is even or odd. If the signature is $-1$, then this
produces an alternating sum before the log comes in which leads to a $(1+z^p)^{\pm 1}$ factor depending on the dimension.
If the signature is $1$, then we have $(1-z^p)^{\pm}$ factors depending on the dimension. \\
In the case $T=\Id$ for example, where the signature is always $1$, we have $(1-z)^{a-b}$ where $a$ is the 
number of odd fixed points and $b$ the number of even fixed points so that it is $(1-z)^{-\chi(G)}$. 
\end{proof} 

\begin{coro}
If $G$ is the union of two disjoint graphs $G_1,G_2$ and $T$ is an automorphism of $G$
inducing automorphisms $T_i$ on $G_i$, then
$$ \zeta_T(z) = \zeta_{T_1}(z) \zeta_{T_2}(z) \; . $$
\label{union1}
\end{coro}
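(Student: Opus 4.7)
My plan is to reduce the corollary to the additivity of the Lefschetz number (Lemma~\ref{lemma1}), together with the multiplicativity of the exponential. The key observation is that since $G_1$ and $G_2$ are disjoint subgraphs whose union is $G$, the simplex set $\G$ of $G$ splits as a disjoint union $\G = \G^{(1)} \sqcup \G^{(2)}$, where $\G^{(i)}$ is the set of simplices of $G_i$ (no simplex can have vertices from both components). Moreover, because $T$ restricts to an automorphism $T_i$ on each $G_i$, both $\G^{(1)}$ and $\G^{(2)}$ are $T$-invariant subsets of $\G$.

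First, I would note that the restriction of $T$ to $\G^{(i)}$ coincides, as a permutation of simplices, with the action of $T_i$ on its own simplex set, and the induced maps on cohomology agree as well. Therefore $L(T^n|\G^{(i)}) = L(T_i^n)$ for every $n \geq 1$. Applying Lemma~\ref{lemma1} to the disjoint $T^n$-invariant sets $\G^{(1)}, \G^{(2)}$ then yields
\begin{equation*}
L(T^n) = L(T^n|\G^{(1)}) + L(T^n|\G^{(2)}) = L(T_1^n) + L(T_2^n).
\end{equation*}

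Second, I would substitute this into the definition of the zeta function and use that $\exp$ turns sums into products:
\begin{equation*}
\zeta_T(z) = \exp\!\Bigl(\sum_{n=1}^{\infty} \bigl(L(T_1^n) + L(T_2^n)\bigr)\frac{z^n}{n}\Bigr) = \zeta_{T_1}(z)\,\zeta_{T_2}(z).
\end{equation*}

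There is really no obstacle; the only subtle point is to convince oneself that no simplex in $G$ straddles the two components, which is automatic because a clique is a complete subgraph and $G_1, G_2$ have no edges between them. As a sanity check one could also verify this via the product formula of the previous theorem: the prime periodic orbits of $T$ are partitioned according to which component contains them, so the exponents $a(p), b(p), c(p), d(p)$ for $T$ split as sums of the corresponding exponents for $T_1$ and $T_2$, and the infinite product factors accordingly.
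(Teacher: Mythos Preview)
Your proof is correct. Your primary route differs from the paper's: the paper's one-line proof is exactly your final ``sanity check'' paragraph, namely that the prime periodic orbits of $T$ split between the two components, so the exponents $a(p),b(p),c(p),d(p)$ in the product formula are additive and the product factors. By contrast, your main argument goes back to the defining series for $\zeta_T(z)$ and uses the additivity $L(T^n)=L(T_1^n)+L(T_2^n)$, which you obtain from Lemma~\ref{lemma1}. Both arguments are short and valid; yours has the minor advantage of not relying on the product-formula theorem (one could even bypass Lemma~\ref{lemma1} entirely by noting $H^k(G)\cong H^k(G_1)\oplus H^k(G_2)$ and the block form of $T_k$), while the paper's approach makes the corollary an immediate byproduct of the structure theorem just proved.
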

\begin{proof}
The numbers $a(p),b(p),c(p),d(p)$ are additive. 
\end{proof}

{\bf Remarks.} \\
1) As in number theory, product formulas are typical for zeta
function. The prototype is the Euler product formula or so called golden key
$$ \zeta(s) = \prod_p (1-p^{-s})^{-1}   = \prod_p (1-z^{\log(p)})^{-1}  \; , $$
where $z =e^{-s}$ was plugged in  just to get formally more close to the dynamical formula above
and explain the etymology of the dynamical-zeta function. \\
2) One usually asks for a functional equation in the case of zeta functions.  If the number
of even dimensional and odd dimensional fixed points correspond, we have a symmetry $z \to -z$.  \\

{\bf Examples.} \\
{\bf 1)} For $T=\Id$, we have $a(1) = \sum_{k \; {\rm odd}} v_k$ and $b(1)=\sum_{k \; {\rm even}} v_k$
so that $\zeta_{\Id}(z) = (1-z)^{a-b} = (1-z)^{-\chi(G)}$.  \\
{\bf 2)} For a reflection $T$ at $C_4$, we have two periodic vertex orbit of period $p=1$, 
one periodic vertex orbit of period $2$ and two edge orbits of period $2$. The 
product formula gives 
$$  \frac{(1-z^2)^2}{(1-z)^2(1-z^2)} = \frac{1+z}{1-z} \; . $$
{\bf 3)} For a rotation $T$ of $C_4$ we have a periodic vertex orbit of period $4$ 
and a periodic edge orbits of period $4$. The product formula gives $1$. 
This follows also directly from the definition since $L(T^k)=0$ for all $k$.  \\
{\bf 4)} For any automorphism $T$ of the complete graph $G=K_n$, we have $L(T^n)=1$ so that 
$\zeta_T(z) = (1-z)^{-1}$.  \\
{\bf 5)} For the identity on the Petersen graph, we have $10$ fixed vertices of index $1$ 
and $15$ edges of index $-1$
$\zeta(\Id) = (1-z)^{15}/(1-z)^{10} = (1-z)^5$ reflecting the fact that $\chi(G)=-5$.   \\

Finite graphs have finite automorphism groups so that one can look at
$$ \zeta_G(z) = \prod_{T \in \A} \zeta_T(z) \; . $$

\begin{coro}
$\zeta(z)$ is a rational function.
\end{coro}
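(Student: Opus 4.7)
The plan is very short because almost all the work has already been done. Since $G$ is a finite simple graph, its automorphism group $\A = {\rm Aut}(G)$ is finite; indeed $\A$ is a subgroup of the symmetric group on $V(G)$. Consequently $\zeta_G(z) = \prod_{T \in \A} \zeta_T(z)$ is a \emph{finite} product.

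By the preceding proposition (and equivalently by the product formula theorem), each individual factor $\zeta_T(z)$ is a rational function in $z$: it can be written explicitly as $\prod_{k \geq 0} \det(1 - z T_k)^{(-1)^{k+1}}$, which is a finite alternating product of polynomials. The class of rational functions over $\mathbb{Q}$ (or $\mathbb{R}$, or $\mathbb{C}$) is closed under finite products and quotients, so the finite product $\prod_{T \in \A} \zeta_T(z)$ is again rational.

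There is no real obstacle here; the only subtlety worth flagging in the write-up is the finiteness of $\A$, which ensures we are genuinely taking a finite product rather than an infinite one. One could optionally note a more explicit formula,
\[
\zeta_G(z) = \prod_{T \in \A} \prod_{k=0}^{\infty} \det(1 - z T_k)^{(-1)^{k+1}},
\]
from which rationality is manifest, or equivalently assemble the exponents $a(p), b(p), c(p), d(p)$ across all $T \in \A$ to present $\zeta_G(z)$ as a product $\prod_p (1-z^p)^{A(p)-B(p)} (1+z^p)^{C(p)-D(p)}$ with $A(p) = \sum_{T \in \A} a_T(p)$, etc. Either presentation closes the proof in one line.
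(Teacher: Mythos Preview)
Your proof is correct and follows exactly the same approach as the paper: $\A$ is finite, each $\zeta_T(z)$ is rational by the preceding proposition, and a finite product of rational functions is rational. The paper's own proof is the single sentence ``It is a finite product of rational functions,'' so your additional explicit formulas are extra detail but not a different route.
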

\begin{proof}
It is a finite product of rational functions. 
\end{proof} 

\begin{coro}
If $G$ is the union of two disjoint graphs $G_i$, then 
$$ \zeta_G(z) = \zeta_{G_1}(z) \zeta_{G_2}(z) \; . $$
\label{union2}
\end{coro}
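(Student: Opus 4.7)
The plan is to reduce the statement to Corollary~\ref{union1} by exploiting the product structure of the automorphism group of a disjoint union. First I would observe that any automorphism of $G = G_1 \sqcup G_2$ permutes connected components; under the assumption that $G_1$ and $G_2$ share no isomorphic connected component, no automorphism can move a piece across the two factors, so restriction yields an isomorphism $\text{Aut}(G) \cong \text{Aut}(G_1) \times \text{Aut}(G_2)$. Every $T \in \text{Aut}(G)$ is then uniquely of the form $T = T_1 \sqcup T_2$ with $T_i \in \text{Aut}(G_i)$.

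Second, I would apply Corollary~\ref{union1} to each such $T$: the fixed simplex sets $\F(T^n)$ split as the disjoint union $\F(T_1^n) \cup \F(T_2^n)$, so the statistics $a(p), b(p), c(p), d(p)$ entering the product formula for $\zeta_T$ are additive across the two components, giving $\zeta_T(z) = \zeta_{T_1}(z)\, \zeta_{T_2}(z)$. Substituting into the definition and reindexing the product by pairs $(T_1, T_2)$,
\[ \zeta_G(z) = \prod_{T \in \text{Aut}(G)} \zeta_T(z) = \prod_{(T_1,T_2) \in \text{Aut}(G_1)\times\text{Aut}(G_2)} \zeta_{T_1}(z)\,\zeta_{T_2}(z), \]
I would then regroup the double product along its two axes and match the result to $\zeta_{G_1}(z)\, \zeta_{G_2}(z)$ via the definition of each graph zeta function.

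The main obstacle, and the step most likely to require care, is the case where $G_1$ and $G_2$ do contain isomorphic connected components. Then $\text{Aut}(G)$ is strictly larger than $\text{Aut}(G_1) \times \text{Aut}(G_2)$: there are additional automorphisms that swap isomorphic components across the two pieces, forming a wreath-product style symmetry. To handle this I would further refine the decomposition by grouping connected components of $G$ into isomorphism classes, reducing the general case to repeated application of the argument above in a setting where no such cross-swaps are possible. A secondary accounting task is to verify that the reindexing of the double product assembles into exactly $\zeta_{G_1}(z)\,\zeta_{G_2}(z)$ without stray multiplicities coming from the sizes of the two automorphism groups.
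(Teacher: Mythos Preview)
Your approach mirrors the paper's one-line proof (``follows from Corollary~\ref{union1}'') but is considerably more careful, and in fact your closing caveat about ``stray multiplicities'' pinpoints a genuine defect in the statement itself rather than a mere technicality. Even in the benign situation where $G_1$ and $G_2$ share no isomorphic components, so that ${\rm Aut}(G)\cong{\rm Aut}(G_1)\times{\rm Aut}(G_2)$ and your reindexing goes through, the regrouping of the double product gives
\[
\zeta_G(z)=\prod_{T_1}\prod_{T_2}\zeta_{T_1}(z)\,\zeta_{T_2}(z)
=\zeta_{G_1}(z)^{|{\rm Aut}(G_2)|}\;\zeta_{G_2}(z)^{|{\rm Aut}(G_1)|},
\]
which equals $\zeta_{G_1}(z)\,\zeta_{G_2}(z)$ only when both automorphism groups are trivial. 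A concrete counterexample: take $G_1=K_2$ and $G_2=K_3$. For every automorphism of either complete graph the paper computes $\zeta_T(z)=(1-z)^{-1}$, so $\zeta_{G_1}(z)=(1-z)^{-2}$ and $\zeta_{G_2}(z)=(1-z)^{-6}$. On the other hand ${\rm Aut}(G)\cong S_2\times S_3$ has order $12$, and by Corollary~\ref{union1} each $\zeta_T(z)=(1-z)^{-2}$, giving $\zeta_G(z)=(1-z)^{-24}\neq(1-z)^{-8}$.

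So the multiplicity issue you anticipated cannot be resolved: the corollary as written is false, and neither your argument nor the paper's can be completed. What your computation actually proves (in the non-isomorphic-component case) is the exponent-weighted identity above; the first obstacle you raised, about swap automorphisms when components are isomorphic, would require yet further correction terms. You were right to be suspicious.
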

\begin{proof}
This follows from Corollary~(\ref{union1}).
\end{proof}

{\bf Examples.} \\
{\bf 1)} We have seen $\zeta_{T}(z) = (1+z)/(1-z)$ for reflections and $\zeta(z)=1$ 
for rotations so that
$\zeta_{C_n}(z) =  (\frac{1+z}{1-z})^n$. \\
{\bf 2)} If $G$ has a trivial automorphism group, 
the product formula is equivalent to the Euler-Poincar\'e formula and
$\zeta_G(z) = \zeta_{\Id}(z) = (1-z)^{-\chi(G)}$. \\
{\bf 3)} For the complete graph $K_n$, we have 
$\zeta_G(z) = (1-z)^{-n!}$. 
The order of the pole at $z=1$ is the size of the automorphism group. \\
{\bf 4)} For the Petersen graph, we computed
$$ \zeta_G(z) = (1-z)^{10} (1+z)^{90} (1+z^2)^{30} (1+z+z^2)^{40} (1-z^4)^{30}(1-z^5)^{24} (1-z^6)^{20} \; . $$

{\bf Remark}. \\
There are other zeta functions for graphs. The Ihara zeta function
\cite{Terras} is defined as $\prod_{p} (1-u^{|p|})^{-1}$ where $p$ runs
over all closed prime paths in the graph and $|p|$ is its length. 
For $C_n$, it is $(1-z^n)^{-2}$ because
there are only two prime paths and both have length $n$. The Ihara zeta function appears unrelated to the 
above zeta function and is closer to the Selberg zeta function \cite{ruellezeta},
where the geodesic flow play the role of  automorphism. Both are of course isomorphism invariants. 
Unlike the average Lefschetz number $L(G)$ which is also an isomorphism invariant, the zeta 
function encodes more information about the graph than $G/\A$. 

\vspace{12pt}
\bibliographystyle{plain}

\pagebreak

\begin{figure}
\scalebox{0.25}{\includegraphics{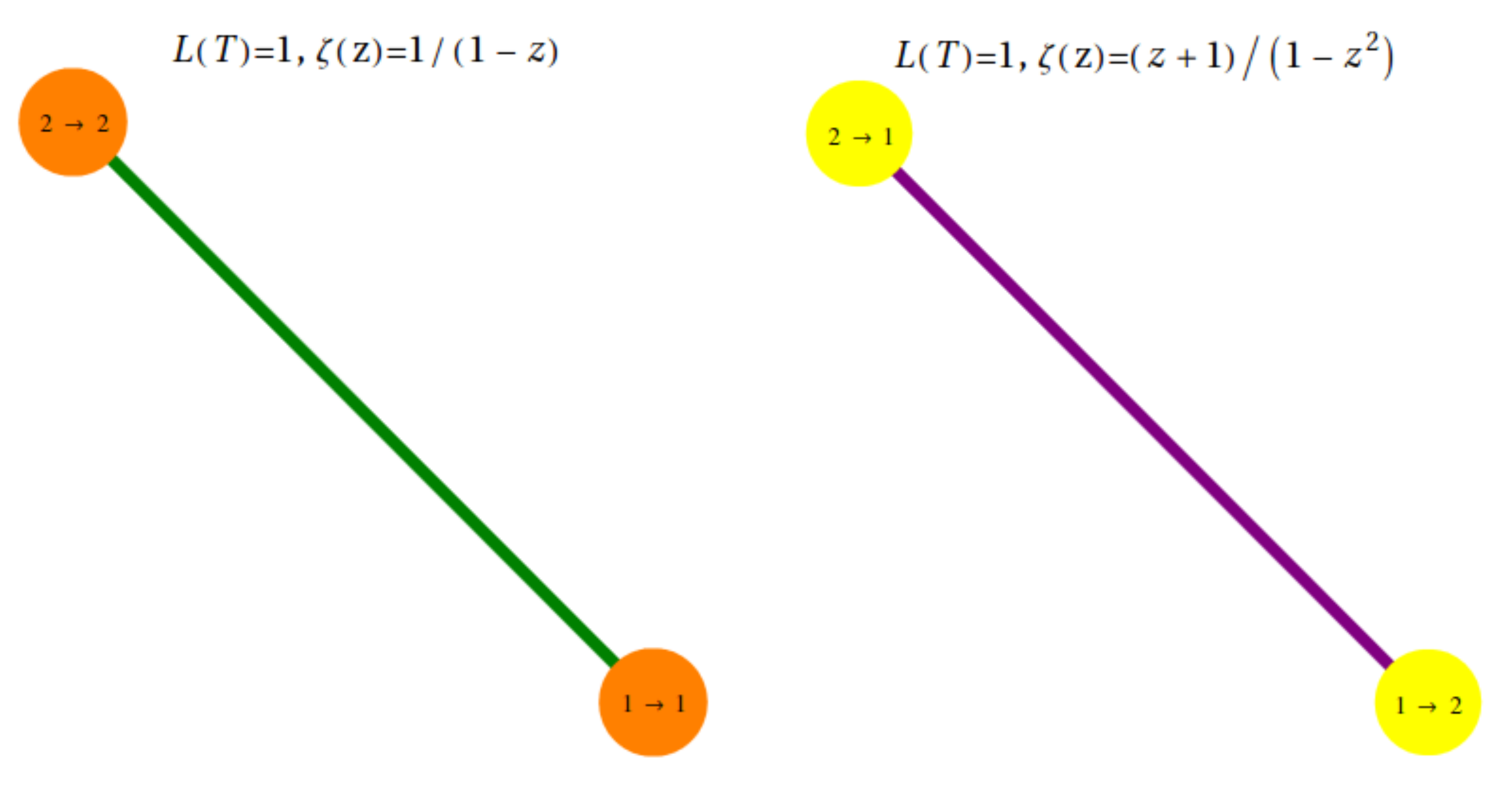}}
\caption{
The complete graph $G=K_2$ has $\A=S_2$ as automorphism group. All
transformations $T$ have Lefschetz number $1$. 
The zeta function of $T=\Id$ only involves
$a(1)=1,b(1)=2$ so that $(1-z)^{1-2}$.
The reflection $T$ has a fixed $K_2$ of negative signature giving $c(1)=1$ 
and a $0$-dimensional periodic point of period $2$ giving $b(2)=1$ 
so that $\zeta(z) = (1+z)/(1-z^2) = 1/(1-z)$. 
\label{example1}
}
\end{figure}

\begin{figure}
\scalebox{0.30}{\includegraphics{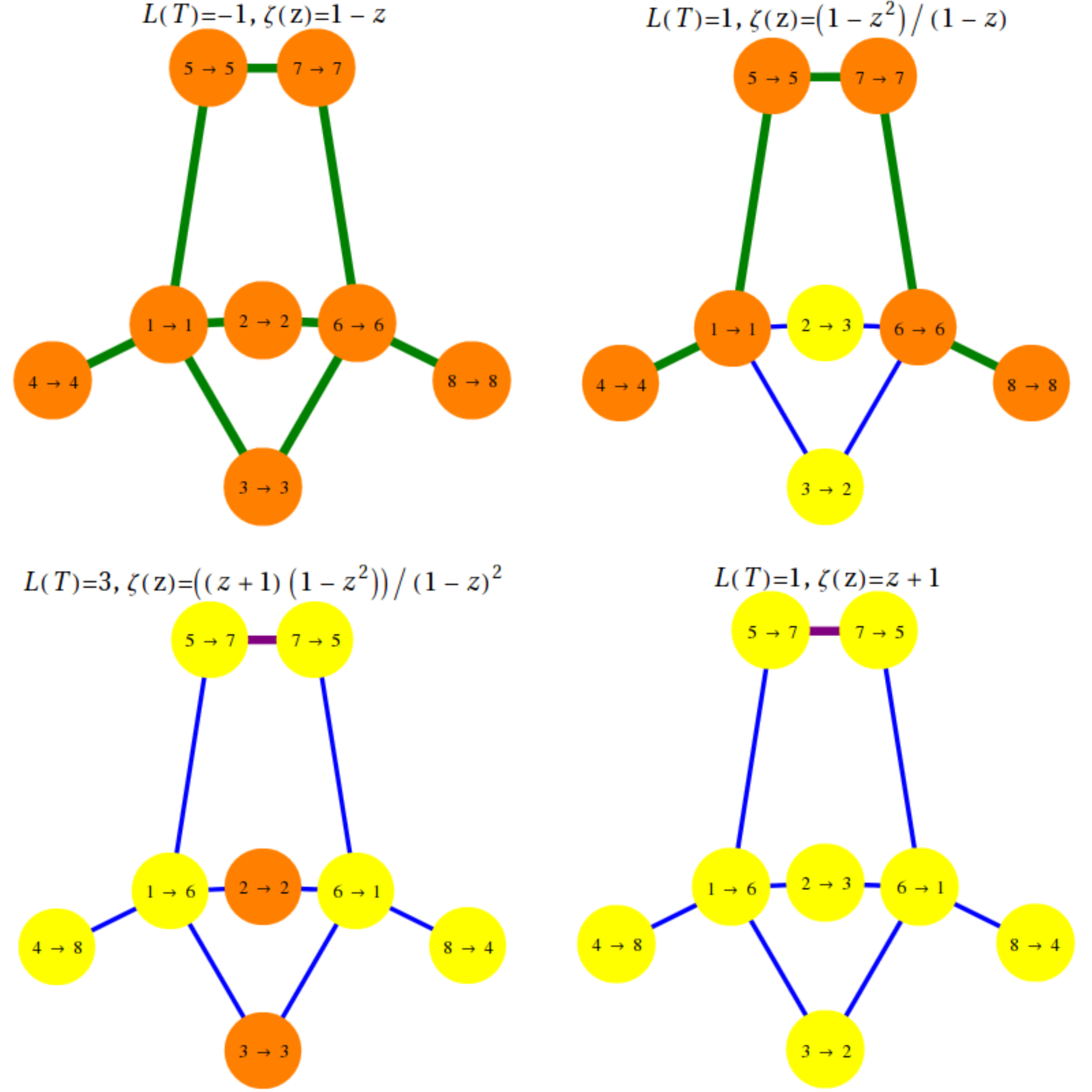}}
\caption{
A graph $G$ of order $8$ and size $9$ with automorphism group $S_2 \times S_2$. 
$T_1=\Id$  with $8$ fixed vertices of index $1$ 
and $9$ fixed edges of index $-1$ has Lefschetz number $L(T)=-1$. 
$T_2$ has $6$ fixed vertices of index $1$ and $5$ fixed edges of index $-1$ 
with Lefschetz number $L(T_2)=1$. 
$T_3$ has two fixed vertices of index $1$ and one edge of index $1$ 
leading to $L(T_3)=3$.  $T_4$ finally has only one fixed edge of index $1$. 
\label{example1}
}
\end{figure}

\begin{figure}
\scalebox{0.30}{\includegraphics{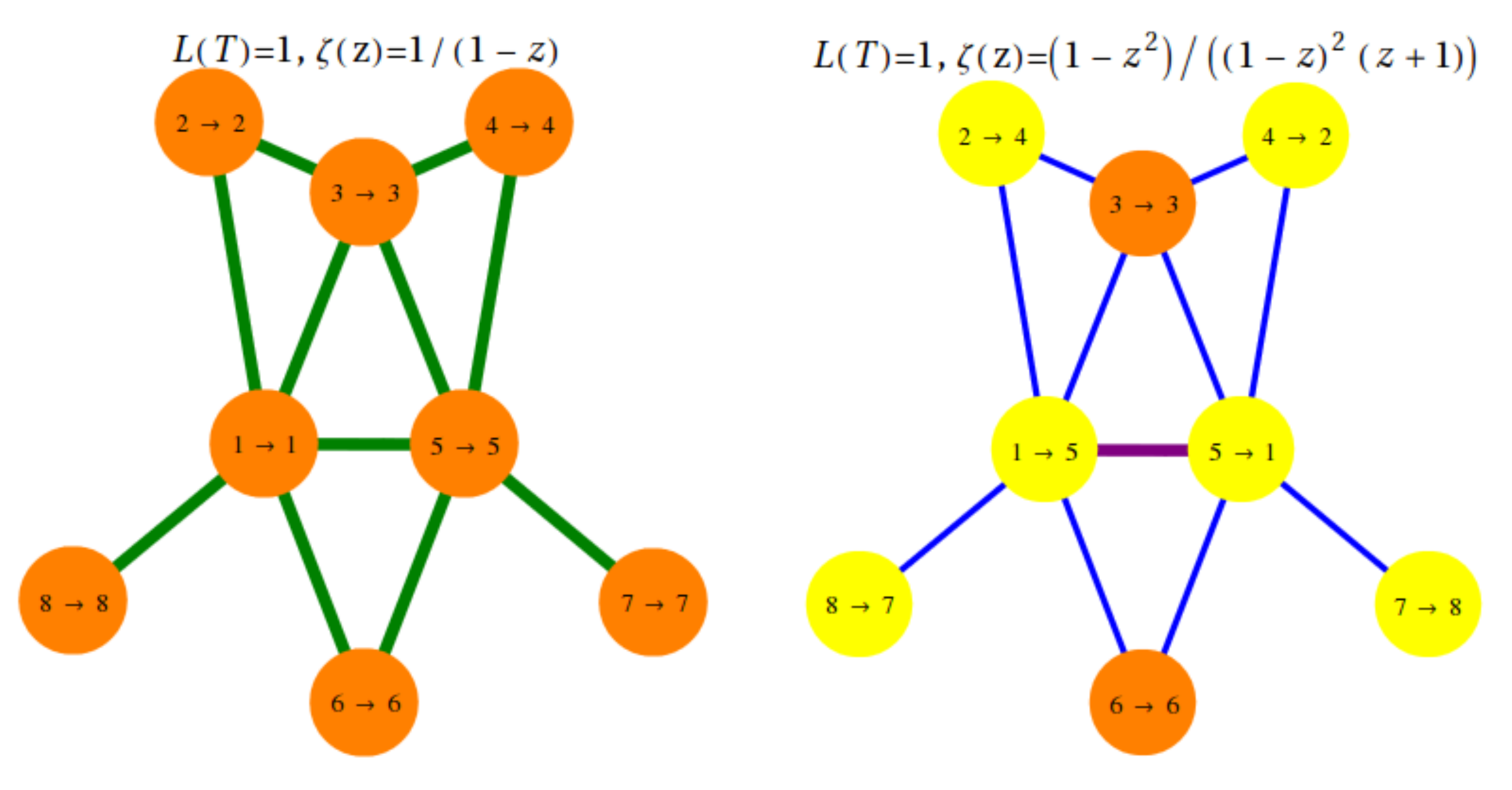}}
\caption{
A graph with a symmetry group of 2 elements. The reflection has
$2$ fixed vertices of index $1$, one fixed edge of index $1$ and $2$ fixed
triangles of index $1$.
\label{example4}
}
\end{figure}

\begin{figure}
\scalebox{0.25}{\includegraphics{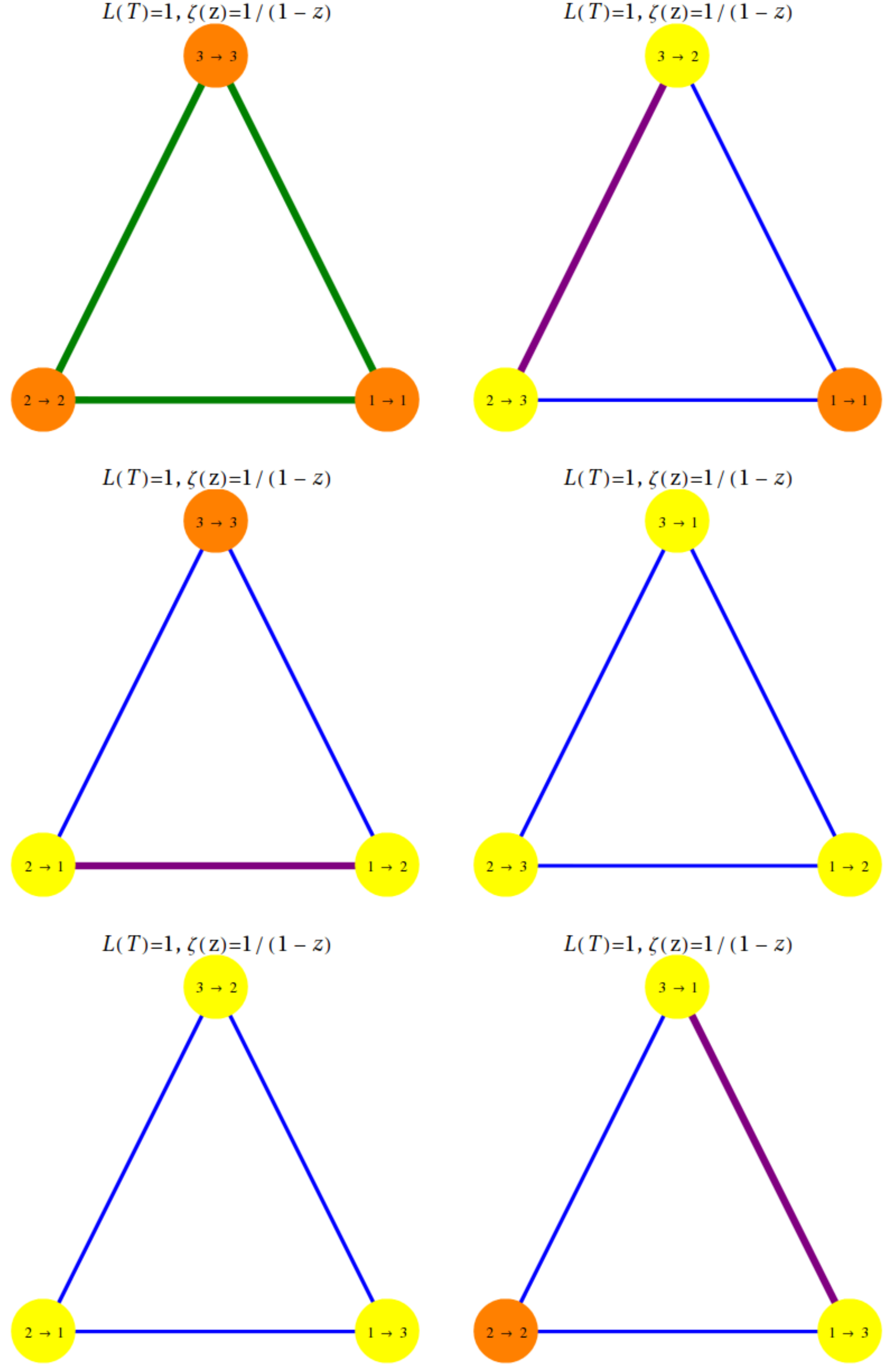}}
\caption{
The complete graph $G=K_3$ has $\A=S_3$ as automorphism group. All
transformations $T$ have Lefschetz number $L(T)=1$. 
\label{example3}
}
\end{figure}

\begin{figure}
\scalebox{0.20}{\includegraphics{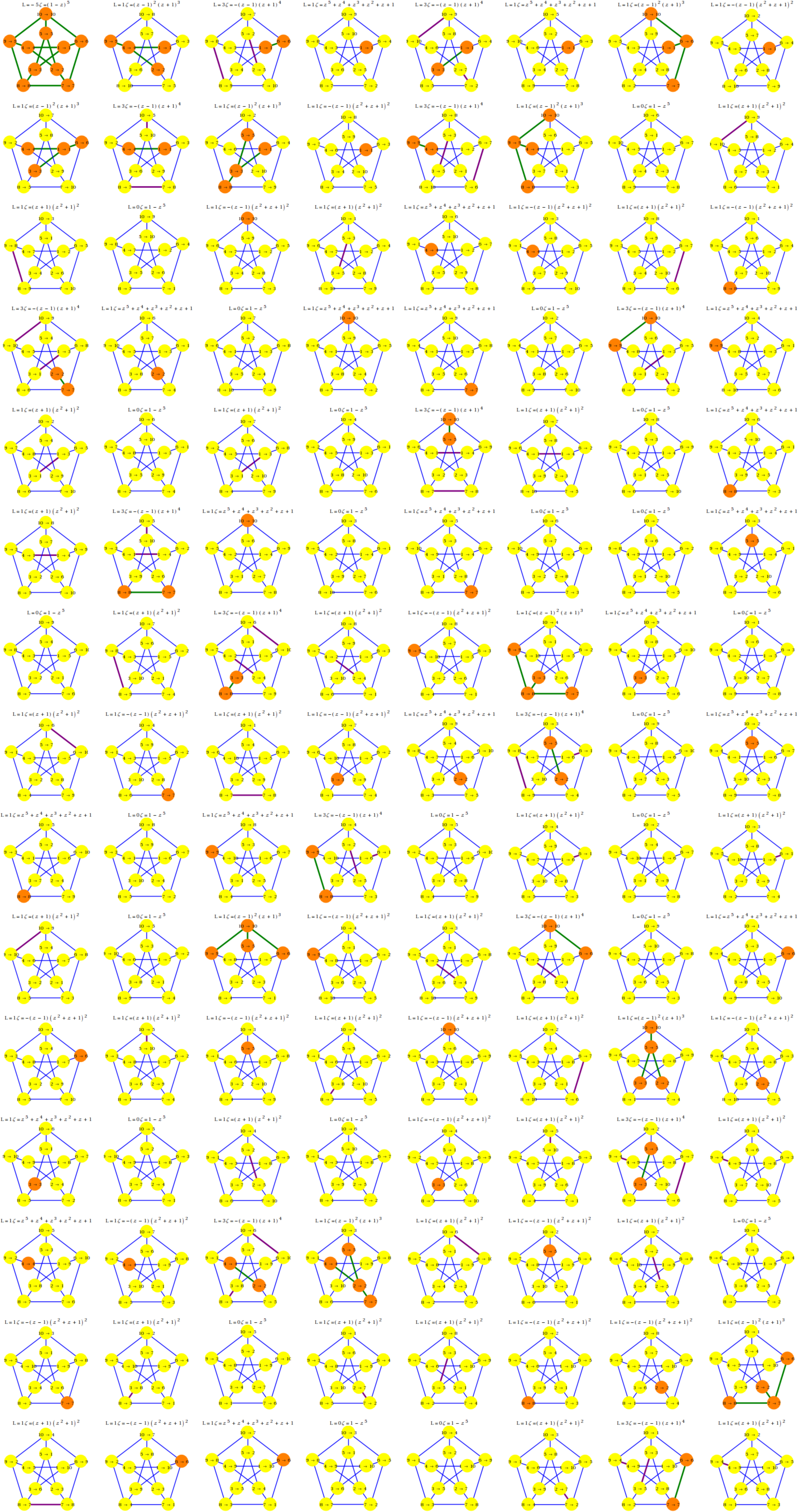}}
\caption{
The 120 automorphisms of the Petersen Graph. In each case, $L(T)$ and $\zeta(z)$ are computed and
the fixed vertices marked.
\label{example2}
}
\end{figure}

\begin{figure}
\scalebox{0.26}{\includegraphics{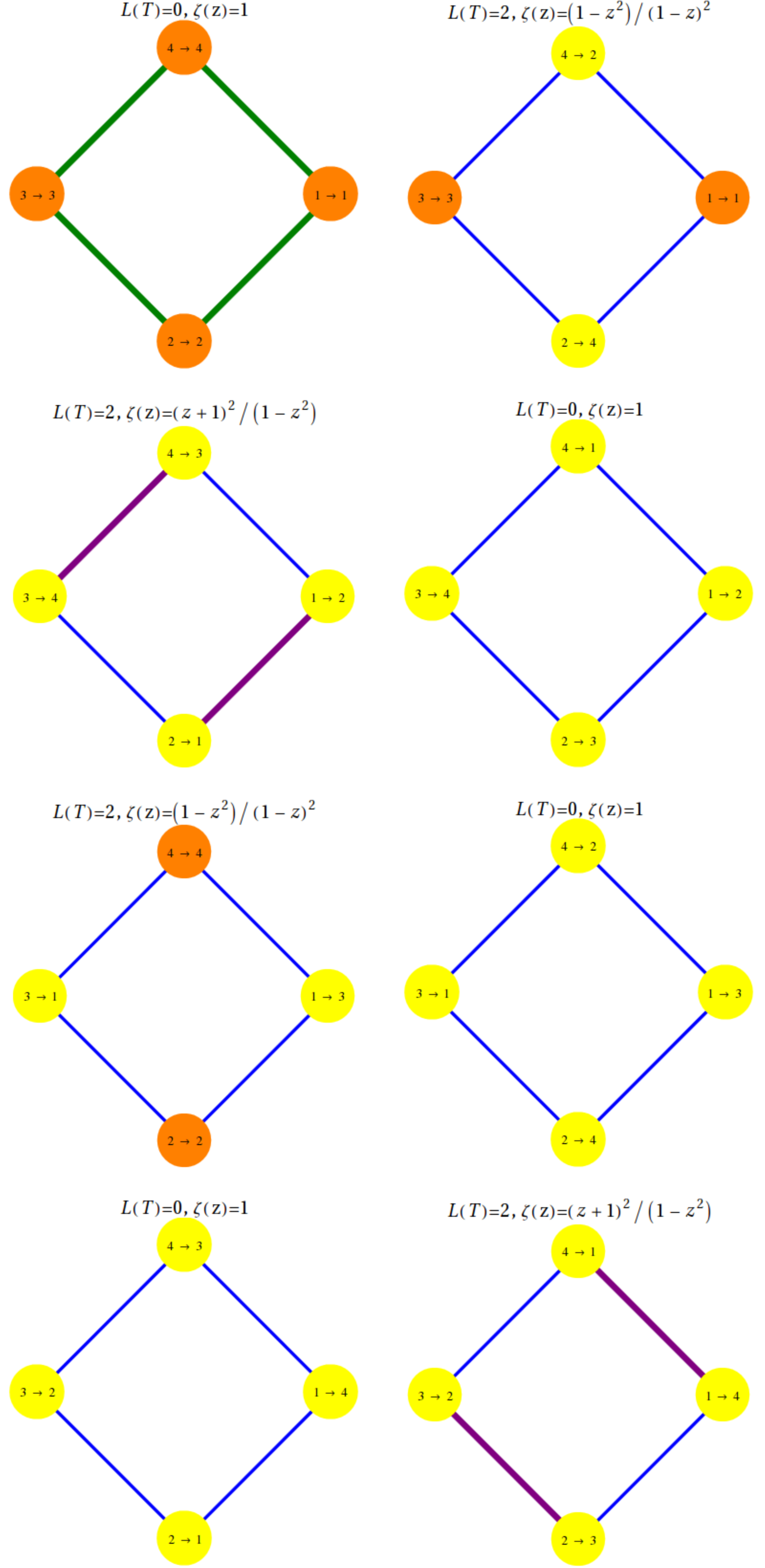}}
\caption{
The graph $G=C_4$ has as
the automorphism group of the graph $G=C_4$ is the dihedral group $D_4$.
The automorphism $T_1=\Id$ with $4$ fixed vertices of index $1$ and $4$ fixed edges of index $-1$ with $L(T_1)=0$.
There are 4 rotations which have no fixed points and $L(T)=0$.
There are 2 reflections which fix two vertices of index $1$.
There are 2 reflections which fix two edges of index $1$. 
All reflections have Lefschetz number $2$.
\label{example2}
}
\end{figure}

\end{document}